\documentclass[reqno,11pt]{amsart}
\usepackage{amssymb, amsmath}
\usepackage{amsthm, amsfonts,mathrsfs}
\usepackage[T1]{fontenc}
\usepackage{times}
\usepackage{graphicx}
\usepackage{subfigure}
\usepackage{epsfig}
\usepackage{xcolor}
\usepackage[compress]{cite}
\usepackage[all]{xy}
\usepackage[pagewise]{lineno}\nolinenumbers
\newtheorem{theorem}{Theorem}[section]

\newtheorem{definition}{Definition}[section]
\newtheorem{lemma}{Lemma}[section]

\newtheorem{remark}{Remark}[section]

\numberwithin{equation}{section}

\newcommand{\p}{\partial}

\newcommand{\normmm}[1]{{\left\vert\kern-0.25ex\left\vert\kern-0.25ex\left\vert #1\right\vert\kern-0.25ex\right\vert\kern-0.25ex\right\vert}}
\allowdisplaybreaks[4]
\begin{document}
\raggedbottom
\title[Modulational stability in a local model for shallow water waves]
{Modulational stability of the periodic traveling wave in a local model for shallow water waves}%
\author[L. Fan]{Lili Fan}%
\address[Lili Fan]{School of Mathematics and Statistics,
Henan Normal University, Xinxiang 453007, China}
\email{fanlily89@126.com, fanlili@htu.edu.cn}
\author[X. Zhang]{Xin Zhang}
\address[Xin Zhang]{School of Mathematics and Statistics,
Henan Normal University, Xinxiang 453007, China}
\email{zhangxinn1201@163.com}
\author[H. Gao]{Hongjun Gao}%
\address[Hongjun Gao]{School of Mathematics,
Southeast University, Nanjing 211189, China}
\email{hjgao@seu.edu.cn, Corresponding author.}
\begin{abstract}
In this paper, we investigate the modulational stability of periodic traveling waves in a local model for shallow water waves, which is an extended version of the Hunter-Saxton equation. We construct a family of small-amplitude periodic traveling waves for this local model and provide a parameterization of these waves. Using Floquet-Bloch theory, perturbation theory, and spectral analysis, we then establish the modulational stability of these background periodic traveling wave solutions. Finally, we analyze the modulational instability of another extended Hunter-Saxton equation with cubic nonlinearities, following a similar approach.
\end{abstract}

\thispagestyle{empty}
\maketitle

\setcounter{tocdepth}{1}

\noindent {\sl Keywords\/}: extended Hunter-Saxton equation; periodic traveling waves; modulational stability


\noindent {\sl AMS Subject Classification} (2010): 35B35, 35B10.
\section{Introduction}
Water wave models have long been regarded as fundamental tools for investigating various nonlinear dispersive phenomena in fluid mechanics, oceanography, and other related physical systems. These models not only depict the dynamic characteristics of wave propagation but also capture the interaction mechanism between nonlinearity and dispersion--two key factors that are crucial to the formation, stability, and evolutionary process of waves. In these models, the behavior of periodic and solitary traveling wave solutions, as well as their responses to small perturbations, has attracted considerable attention from researchers.

In the present study, we focus on the modulational stability of periodic traveling waves in the following local shallow water wave model
\begin{equation}\label{1.1}
\left(c^{2}-2 u\right)u_{x x} - (u_x)^{2}+u-2c u_{t x}= 0,
\end{equation}
where $u = u(t,x)$ is the surface elevation and $c > 0$ is the wave speed. The specific form of this local model was proposed in \cite{Locke}, based on conformal transformations of Euler's equations, and serves as an extension of the Hunter-Saxton equation derived in \cite{Hunter} to describe the dynamics of direction fields. As discussed in \cite{Natali1}, the local model \eqref{1.1} is closely related to the nonlocal Babenko equation \cite{Babenko} and the Camassa-Holm equation \cite{Alber,Alber1}.

In recent decades, research on local nonlinear wave equations has made remarkable achievements. For numerous classical local wave models, integrability has been established, and a series of effective analytical and numerical methods have been developed to explore their dynamical behaviors. Regarding the local model \eqref{1.1}, its integrability was established in \cite{Hone}, along with that of several other peaked wave equations, including the reduced Ostrovsky and short-pulse equations. Some traveling wave solutions of these peaked wave equations were explored in \cite{Matsuno} via Hirota's bilinear method and in \cite{Locke} using dynamical system techniques. Moreover, the local well-posedness of sufficiently smooth solutions in Sobolev spaces has been proven in \cite{Ye}.

Reflecting physical observations and predictions constitutes an essential aspect of mathematical research. Consequently, the investigation of the stability and instability of traveling waves has long been a focal point in both physics and mathematics. Here we consider the periodic traveling waves, which are spatially periodic solutions that propagate at a constant speed while maintaining a fixed profile. As an important class of traveling waves, periodic waves exhibit richer structural complexity due to their dependence on the period, wave speed, and integration constant. Owing to the inherent features of periodic waves, the investigation of their stability typically presents additional technical difficulties in comparison with the analysis of solitary waves. In particular, the periodicity of these waves gives rise to a broader class of perturbations, including co-periodic, multiple-periodic, and localized perturbations, among others.

For the local model \eqref{1.1}, it has been proved that traveling periodic waves with smooth, peaked, and cusped profiles exist, and that the families of smooth and cusped profiles are connected at the limiting wave with the peaked profile, as shown in \cite{Locke}. In that work, the authors also proved the linear stability of smooth periodic waves with respect to co-periodic perturbations. A subsequent study considered the linear and nonlinear instability of the limiting periodic wave with the peaked profile in $H^1_{\textrm{per}}\cap W^{1, \infty}$ \cite{Natali1}. The perturbations considered in the present paper are localized perturbations; that is, we are concerned with the modulational stability of periodic traveling waves within the local model \eqref{1.1}.

Modulational instability, also referred to as Benjamin-Feir instability, is a classical problem in the study of periodic traveling waves, concerning whether these waves remain stable or become unstable under long-wavelength perturbations. This problem can be traced back to the 1960s, when Benjamin and Feir \cite{Benjamin, Benjamin1} discovered that a small amplitude space periodic Stokes wave is unstable under a long-wavelength perturbation. The first rigorous proof of the modulational instability for the Stokes waves was given by Bridges-Mielke \cite{Bridges} in 1995 for the finite-depth case, and later by Nguyen-Strauss \cite{Nguyen} in 2020 for the infinite-depth case. Recently, Berti et al. \cite{Berti} established a complete characterization of the figure-8 pattern of unstable eigenvalues in the deep-water regime, followed by corresponding results in the finite-depth case \cite{Berti1} and the delicate critical-depth case \cite{Berti2}. In addition to the Stokes waves mentioned above, modulational instability has also been observed in several important nonlinear dispersive models, including the studies on modulational instability of the Whitham equation \cite{Hu}, of the generalized KdV equations \cite{Maspero}, of the general Benjamin-Bona-Mahony (BBM) and the regularized Boussinesq types equations \cite{HP,H1}, of the full-dispersion Camassa-Holm equations \cite{HP1}, of the Novikov equation \cite{Ehrman1}, of the $b$-family equation\cite{Fan}, and of dispersive PDE models \cite{Jin}.

It should be noted that although some of the aforementioned models involve nonlocal operators--such as the BBM equation and the Camassa-Holm equation-these nonlocal operators are well defined in the space of square integrable functions. The same does not hold for the local model \eqref{1.1}, which can be rewritten in the form
\begin{equation}\label{1.4}
\left(2cu_t -c^2 u_x +2uu_x\right)_x = u + (u_x)^2.
\end{equation}
Clearly, $\partial_x$ is not invertible in the space of square integrable functions. It is then natural to consider a related model, namely the Ostrovsky equation
\begin{equation}\label{1.3}
\left(v_t + \beta v_{xxx} + (v^2)_x\right)_x = \alpha v, \quad \alpha >0,\quad\beta \in \mathbb{R} \setminus \{0\},
\end{equation}
where the variable has been renamed to avoid confusion with the solution $u$ of \eqref{1.4}. Taking $\beta =0$, \eqref{1.3} reduces to the reduced Ostrovsky equation, whose smooth, peaked, and cusped traveling periodic waves have also been deeply investigated. Notable results include the linear and nonlinear stability of smooth traveling periodic waves \cite{Geyer1, Hakkaev, Johnson16}, and the linear instability of the peaked traveling periodic waves \cite{Geyer2, Geyer3}.

Modulational instability related to the Ostrovsky equation has been investigated in \cite{Bhavna}, where the authors considered the dispersion-generalized Ostrovsky equation. Inspired by this recent work, we examine the modulational stability of periodic traveling waves within the local model \eqref{1.1}. It should be noted that the basic idea of our approach stems from \cite{Bhavna}, and our results reveal that the presence of the term $(u_x)^2$ on the right-hand side of \eqref{1.4} does not affect the modulational stability (see Remark \ref{re2}). On the other hand, our findings are consistent with the results reported in \cite{Locke}, where a $2\pi$-periodic wave of the local model \eqref{1.1} is spectrally stable to co-periodic perturbations (see Remark \ref{re1}).

Finally, the aforementioned approach is applied to analyze the modulational instability of another local model-an integrable equation governing short-waves in a long-wave model, as proposed in \cite{Fa}.
\begin{equation}\label{6.1}
u_{t x}+u u_{x x}+\frac{1}{2}(u_x)^{2}-\frac{\gamma}{2}u_{x x}(u_x)^{2}-u= 0,
\end{equation}
where $u(t, x)$ stands for the fluid velocity at the surface of water waves and $\gamma \in\mathbb{R}$ is a fixed physical parameter related to the dimensionless Bond number. For $\gamma=0$, the equation \eqref{6.1} reduces to the Hunter-Saxton equation. Moreover, \eqref{6.1} admits the following equivalent form
\begin{equation}\label{6.10}
\left(u_t+u u_x-\frac{\gamma}{6}(u_x)^3\right)_x=u+\frac{1}{2}(u_x)^2.
\end{equation}
The main difference between \eqref{6.10} and the local model \eqref{1.4} (i.e., \eqref{1.1}) lies in the replacement of the term $c^2 u_x$ by $\frac{\gamma}{6}(u_x)^3$ and the conclusions of Theorem \ref{the6.1}, together with those of Theorem \ref{the5.1}, reveal that the cubic nonlinear term $\frac{\gamma}{6}(u_x)^3$ induces modulational instability.

The remainder of this paper is organized as follows. In Section \ref{sec expression}, we establish the existence of a one-parameter family of small-amplitude periodic traveling waves to the local model \eqref{1.1} by employing the implicit function theorem and Lyapunov-Schmidt reduction, and give a parameterization of these waves. In Section \ref{sec Bloch}, we formulate the spectral stability problem based on the linearization of \eqref{1.1} about the obtained periodic traveling wave, and present the results on the Bloch-wave decomposition. In Section \ref{sec unper}, we investigate the nontrivial kernel of the unperturbed operators. In Section \ref{sec spectra}, by means of perturbation arguments, we obtain the modulational stability of the small-amplitude periodic traveling wave solutions to equation \eqref{1.1}. In Section \ref{Another model}, we present an analysis of the modulational instability for the local model \eqref{6.1}.
\subsection*{Notations}
Throughout this paper, we will use the following notations. The space $L^2(\mathbb{R})$ consists of all Lebesgue measurable, real or complex-valued functions $f : \mathbb{R}\rightarrow\mathbb{C}$ such that
\[
\|f\|_{L^2(\mathbb{R})}=\left(\int_{\mathbb{R}}|f(x)|^2 \mathrm{~d} x\right)^{1 / 2}<+\infty.
\]
Correspondingly, $L^2(\mathbb{T})$ denotes the space of $2 \pi$-periodic, measurable, real or complex-valued functions over $\mathbb{R}$ such that
\[
\| f \|_{L^2(\mathbb{T})}=\left(\frac{1}{2 \pi} \int_{0}^{2\pi}|f(x)|^2 \mathrm{~d} x\right)^{1 / 2}<+\infty,
\]
and $\|f\|_{L^{\infty}(\mathbb{T})}:=\operatorname{ess} \sup _{0\leq z <2\pi}|f(z)|<\infty$, where $\operatorname{ess} \sup$ represents the essential supremum over the periodic domain. The first-order periodic Sobolev space $H^1(\mathbb{T})$ is defined as
$$
H^1(\mathbb{T})= \left\{f\in L^2(\mathbb{T})| \p_z f\in L^2(\mathbb{T})\right\}.
$$
For any integer $k\geq0$, the $k$-th order Sobolev space $H^k(\mathbb{T})$ can be defined recursively. We further denote the infinite-order smooth periodic function space by the intersection $H^{\infty}(\mathbb{T})=\bigcap_{k=0}^{\infty} H^k(\mathbb{T})$.
For $f \in L^1(\mathbb{T})$, the Fourier series of $f$ is defined by
$$
\sum_{n \in \mathbb{Z}} \widehat{f}_n \mathrm{e}^{\mathrm{i} n z}, \quad \text { where } \widehat{f}_n=\frac{1}{2 \pi} \int_{0}^{2\pi} f(z) \mathrm{e}^{-\mathrm{i} n z} \mathrm{~d} z .
$$
If $f \in L^2(\mathbb{T})$, then its Fourier series converges to $f$ pointwise almost everywhere. We define the $L^2(\mathbb{T})$-inner product as
\begin{equation}\label{1.2}
\langle f, g\rangle=\frac{1}{2 \pi} \int_{0}^{2\pi} f(z) \bar{g}(z) \mathrm{d} z=\sum_{n \in \mathbb{Z}} \widehat{f}_n \overline{\widehat{g}}_n.
\end{equation}
Moreover, the commutator of two operators acting on a Hilbert space is defined by the following
$$
[F, G]:=F \circ G-G \circ F.
$$
\section{Asymptotically small-amplitude periodic traveling waves}\label{sec expression}
We begin by seeking periodic traveling wave solutions to \eqref{1.1}. Traveling waves of the equation \eqref{1.1} are solutions of the form
\[
u(x, t) = u(x - ct),
\]
where $c>0$ is the speed of propagation, and $u$ satisfies the ODE
$$
\left(3c^{2}-2 u\right)u^{\prime \prime} - (u^{\prime})^{2}+u= 0.
$$
Let $u$ be a $2 \pi / k$-periodic traveling wave solution of \eqref{1.1}, for some wave number $k > 0$. Then, $\eta(z):=u(k(x-ct))$ is a $2 \pi$-periodic function in $z$, satisfying
\begin{equation}\label{2.1}
\left(3 c^{2}-2 \eta\right) k^{2} \eta^{\prime \prime} - k^{2}(\eta^{\prime})^{2}+\eta = 0.
\end{equation}
Note that \eqref{2.1} is invariant under translations $(z \mapsto z+z_0$, $z \mapsto -z)$ for any $z_0\in\mathbb{R}$ and therefore, we may assume that $\eta$ is even.

Let $F:H^{2}_{\text{even}}(\mathbb{T}) \times\mathbb{R}^{+}\times\mathbb{R}^{+}\rightarrow L^2(\mathbb{T})$ be defined as
\begin{equation}\label{2.2}
F(\eta, c; k)=\left(3 c^{2}-2 \eta\right) k^{2} \eta^{\prime \prime} - k^{2}(\eta^{\prime})^{2}+\eta.
\end{equation}
We seek a non-trivial $2 \pi$-periodic solution $\eta\in H^{2}_{\text{even}}(\mathbb{T})$ of \eqref{1.1} such that
\begin{equation}\label{2.3}
F(\eta, c; k)=0.
\end{equation}
Noting that $\eta\in H^{2}_{\text{even}}(\mathbb{T})$ solves \eqref{2.1}, by the Sobolev inequality and a bootstrap argument, it follows that solutions $\eta\in H^{2}_{\text{even}}(\mathbb{T})$ of \eqref{2.1} necessarily satisfy $\eta\in H^{\infty}_{\text{even}}(\mathbb{T})$.

To study \eqref{2.1}, note that $F(0, c; k)=0$ for all $c>0$ and $k>0$ and
\begin{equation*}
\partial_\eta F\left(0, c; k\right)=3 c^{2} k^{2}\partial^{2}_{z}+1,
\end{equation*}
so that, in particular, we have
\begin{equation*}
\partial_\eta F\left(0, c; k\right) \cos(nz)=\left(1-3 c^{2} k^{2}n^{2}\right)\cos(nz).
\end{equation*}
It follows that $\ker\big(\partial_\eta F\left(0, c_0; k\right)\big)=\operatorname{span}(\cos(z))$, provided that
\begin{equation}\label{2.3a}
c=c_0=\frac{1}{\sqrt{3} k}.
\end{equation}
Using a Lyapunov-Schmidt argument, one can thus establish the existence of a one-parameter family of non-trivial, even solutions $\big(\eta(a;k)(\cdot), c(a;k)\big)$ of \eqref{2.1} for $|a|\ll 1$. This existence argument is elementary and follows the same lines as those in \cite{HP,HP1}, and is hence omitted here. The small-amplitude expansion of these solutions is given as follows and the details are provided in Appendix \ref{app A}.
\begin{lemma}\label{le1}
For all wavenumbers $k>0$, there exists a one parameter family of solutions of \eqref{2.1} given by $u(x, t)=\eta(a; k)\big(k(x-c(a; k)t)\big)$ for $a\in \mathbb{R}$ and $|a|$ sufficiently small; $\eta(a; k)(\cdot)$ is $2 \pi$-periodic, even and smooth in its argument, and $c(a; k)$ is even in $a$; $\eta(a; k)(\cdot)$ and $c(a; k)$ depend analytically on $a$ and $k$. Moreover, as $a \to 0$,
\begin{equation}\label{2.4}
\eta(a; k)(z)= a \cos\left(z\right)+a^{2}\left(A_0+A_2\cos\left(2z\right)\right)+a^{3} A_3\cos\left(3z\right)+O\left(a^4\right),
\end{equation}
\begin{equation}\label{2.5}
c(a; k)=c_0+a^2 c_2+O\left(a^4\right),
\end{equation}
where
\begin{equation}\label{2.6}
\begin{array}{ll}
\displaystyle A_0=-\frac{k^2}{2},\quad A_2=-A_0,\quad A_3=\frac{7k^4}{16}, \\
\displaystyle c_0=\frac{1}{\sqrt3 k}, \quad c_2=\frac{k^3}{4\sqrt3}.
\end{array}
\end{equation}
\end{lemma}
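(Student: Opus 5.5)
The plan is to write the solution as an expansion in the amplitude parameter $a$, substitute it into \eqref{2.3}, and solve order by order, using the Lyapunov--Schmidt reduction described before the statement to justify the expansion. Existence, analyticity and evenness are handled first, then the coefficients.

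\textbf{Existence and regularity.} Fix $k>0$ and write $L:=\partial_\eta F(0,c_0;k)=\partial_z^2+1$, where $3c_0^2k^2=1$ by \eqref{2.3a}. On $H^2_{\text{even}}(\mathbb{T})$ this operator is Fredholm of index zero, with kernel and cokernel both equal to $\operatorname{span}(\cos z)$. Let $\Pi$ be the $L^2(\mathbb{T})$-orthogonal projection onto $\cos z$.

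\begin{itemize}
\item Decompose $\eta=a\cos z+w$ with $\langle w,\cos z\rangle=0$.
\item Solve $(I-\Pi)F(a\cos z+w,c;k)=0$ for $w=w(a,c;k)$ by the implicit function theorem. This works because $L$ is invertible on the complement of $\cos z$.
\item Since $F$ is polynomial in $\eta$, $c$ and $k$, the map $w$ is analytic, and $w=O(a^2)$.
\item The remaining scalar equation $\Pi F=0$ vanishes identically at $a=0$, so we may divide it by $a$. The reduced equation has nonzero $c$-derivative at $(a,c)=(0,c_0)$, coming from the term $6c_0k^2\,\partial_z^2\cos z$. A second application of the implicit function theorem then gives $c=c(a;k)$, analytic in $a$ and $k$.
\end{itemize}

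Evenness of $c$ in $a$ follows from the symmetry $z\mapsto z+\pi$. This shift preserves even $2\pi$-periodic functions and sends $\cos z$ to $-\cos z$. So $(\eta(a)(\cdot+\pi),c(a))$ is the solution with parameter $-a$, and uniqueness gives $c(-a)=c(a)$. Smoothness in $z$ follows from the bootstrap remark already made in the text.

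\textbf{Computing the coefficients.} Substitute $\eta=a\eta_1+a^2\eta_2+a^3\eta_3+O(a^4)$, with $\eta_1=\cos z$, and $c=c_0+a^2c_2+O(a^4)$ into
\[
F=3c^2k^2\eta''-2k^2\eta\eta''-k^2(\eta')^2+\eta .
\]
Normalize by requiring $\langle\eta_j,\cos z\rangle=0$ for $j\ge2$.

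\begin{itemize}
\item At order $a^2$, the identities $\eta_1\eta_1''=-\tfrac12(1+\cos 2z)$ and $(\eta_1')^2=\tfrac12(1-\cos 2z)$ give
\[
L\eta_2=-\tfrac{k^2}{2}-\tfrac{3k^2}{2}\cos 2z .
\]
Since $L1=1$ and $L\cos 2z=-3\cos 2z$, this yields $A_0=-k^2/2$ and $A_2=k^2/2=-A_0$.
\item At order $a^3$ we get
\[
L\eta_3+6c_0c_2k^2\eta_1''-2k^2(\eta_1\eta_2''+\eta_2\eta_1'')-2k^2\eta_1'\eta_2'=0 .
\]
Expand the products into $\cos z$ and $\cos 3z$ using product-to-sum formulas. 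The $\cos z$ component must vanish, which is the solvability condition, and this determines $c_2$. The $\cos 3z$ component is then inverted using $L\cos 3z=-8\cos 3z$, which gives $A_3$.
\end{itemize}

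The main obstacle is purely computational: getting the trigonometric bookkeeping at order $a^3$ right, so that the stated values $c_2=k^3/(4\sqrt3)$ and $A_3=7k^4/16$ come out. I would check this by direct Fourier-mode collection and verify the result symbolically.

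The $O(a^4)$ remainder in $c$, rather than $O(a^3)$, is automatic from the evenness of $c$ in $a$ established above.
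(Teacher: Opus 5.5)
Your proposal is correct and takes essentially the same route as the paper. For existence you use a Lyapunov--Schmidt reduction at $c_0=1/(\sqrt3 k)$; the paper omits this as standard, and you supply more detail, including the $z\mapsto z+\pi$ argument for evenness of $c$. You then match orders in $a$, and your $a^2$ and $a^3$ equations coincide with \eqref{2.9}. The $a^3$ bookkeeping you left unexecuted does close: the quadratic terms collect to $\tfrac{k^4}{2}\cos z+\tfrac{7k^4}{2}\cos 3z$. Solvability then gives $6c_0c_2k^2=\tfrac{k^4}{2}$, i.e.\ $c_2=k^3/(4\sqrt3)$, and $L\cos 3z=-8\cos 3z$ gives $A_3=7k^4/16$.
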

\section{Bloch-wave decomposition}\label{sec Bloch}
Throughout this section, let $\eta = \eta(z; a, k)$ with $k>0$ and $|a| \ll 1$ be a small-amplitude $2 \pi$-periodic traveling wave solution of \eqref{2.1} with wave speed $c=c(a; k)$, whose existence follows from Lemma \ref{le1}. Linearizing equation \eqref{1.1} about $\eta$ given in \eqref{2.4}, and considering perturbations to $\eta$ in the form $\eta+\varepsilon v(t, z)$, we arrive at the linear evolution equation
\begin{equation}\label{3.1a}
-2c kv_{t z}-2k^{2}(\eta v)_{z z}+2k^{2}\eta_z v_z+3c^2 k^2 v_{z z}+v=0.
\end{equation}
For $v(z, t)=\mathrm{e}^{\frac{\lambda}{k}t} \widetilde{v}(z)$, $\lambda\in\mathbb{C}$ and $\widetilde{v}(z) \in L^2(\mathbb{R})$, we arrive at the equation
\begin{equation}\label{3.1}
\mathcal{T}^{\lambda}_{k, a} \widetilde{v}:= \left[2(c \lambda-k^2 \eta_z)\p_z+k^2 \p_{z z}\left(-3c^2+2\eta\right)-1\right]\widetilde{v}=0,
\end{equation}
where $\mathcal{T}^{\lambda}_{k, a}:H^2(\mathbb{R})\rightarrow L^2(\mathbb{R})$ is considered as a closed, densely defined linear operator.
\begin{definition}[Spectral stability]
For a $2 \pi/k$-periodic traveling wave solution $u(x, t)=\eta(k(x-c t))$ of \eqref{1.1}
where $\eta$ and $c$ are given in \eqref{2.4}-\eqref{2.5}, we say that the periodic wave $\eta$ is spectrally stable with respect to square integrable perturbations if the operator $\mathcal{T}^{\lambda}_{k, a}$ is invertible on $L^2(\mathbb{R})$ for every $\lambda\in \mathbb{C}$ with $\operatorname{Re}(\lambda)>0$. Otherwise, $\eta$ is deemed to be spectrally unstable.
\end{definition}
\begin{remark}\label{rem3.1}
Due to the fact that $\eta$ is an even function, and \eqref{3.1} is invariant under the transformations $(v, \lambda)\mapsto(\overline{v},\overline{\lambda})$ and $(z, \lambda)\mapsto(-z, -\lambda)$, the set of $\lambda \in \mathbb{C}$, for which the operator $\mathcal{T}^{\lambda}_{k, a}$ fails to be invertible, is symmetric with respect to both the real and imaginary axes. Then $\eta$ is spectrally stable if and only if $\mathcal{T}^{\lambda}_{k, a}$ is invertible for all $\lambda \in \mathbb{C}$ with $\operatorname{Re}(\lambda)\neq 0$.
\end{remark}
Since the coefficients of the operator $\mathcal{T}^{\lambda}_{k, a}$ are periodic functions, we invoke Floquet theory to characterize the solutions of \eqref{3.1} in $L^2(\mathbb{R})$. Specifically, every solution $\widetilde{v}(z) \in L^2(\mathbb{R})$ of \eqref{3.1} can be expressed in the form $\widetilde{v}(z) = \mathrm{e}^{i\mu z} V(z)$, where $\mu \in\left(-\frac{1}{2}, \frac{1}{2}\right]$ is the Floquet exponent and $V$ is a $2 \pi$-periodic function. For a similar situation, we refer the reader to \cite{H1}. This reduces the invertibility problem of the operator $\mathcal{T}^{\lambda}_{k, a}$ on $L^2(\mathbb{R})$ to a one-parameter family of invertibility problem of Bloch operators on $L^2(\mathbb{T})$, which is parameterized by the Floquet exponent $\mu$. A precise reformulation of this result is provided in the following lemma.
\begin{lemma}\label{lem3.1}
The linear operator $\mathcal{T}^{\lambda}_{k, a}$ is invertible with bounded inverse on $L^2(\mathbb{R})$ if and only if the Bloch operators
\begin{equation}\label{3.2}
\mathcal{T}^{\lambda}_{k, a, \mu}:=2\left(c \lambda-k^2 \eta_z\right)\left(\partial_z+i \mu\right)+k^2\left(\partial_z+i \mu\right)^2\left(-3c^2+2\eta\right)-1,
\end{equation}
acting in $L^2(\mathbb{T})$ with domain $H^2(\mathbb{T})$ are invertible for all $\mu \in(-\frac{1}{2}, \frac{1}{2}]$. Moreover, $\mathcal{T}^{\lambda}_{k, a, \mu}$ is invertible in $L^2(\mathbb{T})$ if and only if zero is not an $L^2(\mathbb{T})$-eigenvalue of $\mathcal{T}^{\lambda}_{k, a, \mu}$.
\end{lemma}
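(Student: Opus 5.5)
The plan is the standard Floquet--Bloch reduction. I would use the Bloch transform $\mathcal{B}$, defined for $f\in L^2(\mathbb{R})$ by
\[
(\mathcal{B}f)(\mu,z)=\sum_{n\in\mathbb{Z}} \mathrm{e}^{2\pi i n}\ldots
\]
or, more concretely, $\check f(\mu,z)=\sum_{n\in\mathbb{Z}}\widehat f(\mu+n)\mathrm{e}^{inz}$ with $\mu\in(-\frac12,\frac12]$. This gives the representation $f(z)=\int_{-1/2}^{1/2}\mathrm{e}^{i\mu z}\check f(\mu,z)\,\mathrm{d}\mu$. By Parseval it is, up to a constant, an isometry
\[
L^2(\mathbb{R})\cong L^2\big((-\tfrac12,\tfrac12];L^2(\mathbb{T})\big),
\]
and it maps $H^2(\mathbb{R})$ onto $L^2((-\frac12,\frac12];H^2(\mathbb{T}))$. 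Since the coefficients $c\lambda-k^2\eta_z$ and $-3c^2+2\eta$ are $2\pi$-periodic, multiplication by them commutes with the fibre decomposition. Moreover, $\partial_z(\mathrm{e}^{i\mu z}V)=\mathrm{e}^{i\mu z}(\partial_z+i\mu)V$. It follows that $\mathcal{B}\,\mathcal{T}^{\lambda}_{k,a}\,\mathcal{B}^{-1}$ acts fibrewise as $\mathcal{T}^{\lambda}_{k,a,\mu}$, i.e. $\mathcal{T}^\lambda_{k,a}$ is unitarily equivalent to the direct integral $\int^\oplus \mathcal{T}^{\lambda}_{k,a,\mu}\,\mathrm{d}\mu$.

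With this in hand, the first equivalence reduces to an abstract direct-integral statement: such an operator has a bounded inverse if and only if each fibre is invertible and $\sup_\mu\|(\mathcal{T}^{\lambda}_{k,a,\mu})^{-1}\|<\infty$.

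\textbf{Necessity.} I would argue by contradiction. If some fibre $\mu_0$ were not invertible, then by the second statement (proved below) there is a $V\in H^2(\mathbb{T})$ with $\mathcal{T}^{\lambda}_{k,a,\mu_0}V=0$. Cutting off $\mathrm{e}^{i\mu_0 z}V(z)$ with a wide bump $\chi(z/R)$ produces a Weyl sequence: the commutator with $\mathcal{T}^\lambda_{k,a}$ is $O(R^{-1})$ relative to the norm. This contradicts bounded invertibility.

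\textbf{Sufficiency.} I need uniformity in $\mu$. The family $\mu\mapsto\mathcal{T}^{\lambda}_{k,a,\mu}$ is a polynomial (hence analytic) family in $\mu$ of operators $H^2(\mathbb{T})\to L^2(\mathbb{T})$. The set $\mu\in[-\frac12,\frac12]$ is compact, and the endpoints are identified via the conjugation $V\mapsto\mathrm{e}^{iz}V$. Invertibility at each $\mu$ is an open condition with a continuous inverse. Hence the norms of the inverses are uniformly bounded, and the fibrewise inverses assemble into a bounded inverse on $L^2(\mathbb{R})$.

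The second statement is a Fredholm argument. The leading-order part is $k^2(-3c^2+2\eta)\partial_z^2$. For $|a|\ll1$ its coefficient $-3c^2+2\eta$ stays bounded away from zero, so $\mathcal{T}^{\lambda}_{k,a,\mu}$ is uniformly elliptic on $\mathbb{T}$. It is therefore a bounded operator from $H^2(\mathbb{T})$ to $L^2(\mathbb{T})$ differing from an invertible operator $k^2(-3c^2+2\eta)\partial_z^2-M$ by a relatively compact perturbation: the lower-order terms map $H^2$ compactly into $L^2$ by Rellich. Consequently $\mathcal{T}^{\lambda}_{k,a,\mu}$ is Fredholm of index zero, its spectrum is discrete, and invertibility is equivalent to a trivial kernel.

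The main obstacle is making the ellipticity and uniformity precise. Concretely, this means checking that $3c^2-2\eta\ge 3c_0^2/2>0$ using Lemma \ref{le1}, and then verifying carefully that the direct-integral argument yields a uniform resolvent bound in $\mu$. For the latter I would first try the continuity-plus-compactness argument given above.
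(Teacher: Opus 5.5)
Your proposal is correct and follows the same Floquet--Bloch reduction the paper relies on. The paper gives no argument of its own, only the heuristic remark that solutions of \eqref{3.1} take the form $\mathrm{e}^{i\mu z}V(z)$ and a pointer to \cite{H1}. Your argument supplies what is left implicit there:
\begin{itemize}
\item the unitary direct-integral decomposition via the Bloch transform;
\item the Weyl-sequence argument for necessity, which correctly reflects that non-invertibility on $L^2(\mathbb{R})$ comes from cut-off bounded Bloch waves rather than genuine $L^2(\mathbb{R})$ eigenfunctions;
\item the continuity-plus-compactness bound in $\mu$ for sufficiency;
\item the ellipticity of $-k^2(3c^2-2\eta)$ for $|a|\ll 1$, which gives Fredholm index zero.
\end{itemize}

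Two small things to tidy. The first formula for $\mathcal{B}$ is garbled; your concrete definition of $\check f$ is the one to keep. The sign of $M$ also matters: because $k^2(-3c^2+2\eta)<0$, the comparison operator $k^2(-3c^2+2\eta)\partial_z^2-M$ is invertible for $M<0$ (e.g.\ $M=-1$), but it may fail to be invertible for $M>0$.
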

\begin{definition}[Modulational stability]
A periodic traveling wave solution $\eta(a; k)$ of \eqref{1.1} is said to be modulationally stable if the associated linear operators $\mathcal{T}^{\lambda}_{k, a, \mu}$ are invertible on $L^2(\mathbb{T})$ for all $|(\lambda,\mu)| \ll 1$
with $\operatorname{Re}(\lambda)\neq 0$. Otherwise, the solution $\eta(a; k)$ is modulationally unstable.
\end{definition}
It is straightforward to obtain the following symmetry property of the Bloch operators $\mathcal{T}^{\lambda}_{k, a, \mu}$.
\begin{lemma}[Symmetry property]
Assume that $\mu \in\left(-\frac{1}{2}, \frac{1}{2}\right]$. Then the Bloch operators $\mathcal{T}^{\lambda}_{k, a, \mu}$ acting on $L^2(\mathbb{T})$ satisfy
\begin{equation}\label{3.3}
\mathcal{T}^{\lambda}_{k, a, \mu}(z)=\overline{\mathcal{T}^{-\overline{\lambda}}_{k, a, \mu}(-z)}=\overline{\mathcal{T}^{\overline{\lambda}}_{k, a, -\mu}(z)}.
\end{equation}
\end{lemma}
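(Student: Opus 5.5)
We verify both identities in \eqref{3.3} by direct computation on the explicit formula \eqref{3.2}, using only two facts from Lemma \ref{le1}: the coefficients $\eta$ and $c$ are real-valued, and $\eta$ is even in $z$, so that $\eta_z$ is odd. We interpret $\mathcal{T}(-z)$ as the operator obtained by conjugating with the reflection $(Rf)(z)=f(-z)$, that is, $R\,\mathcal{T}\,R$, and $\overline{\mathcal{T}}$ as the operator $f\mapsto\overline{\mathcal{T}\overline{f}}$. Equivalently, $\overline{\mathcal{T}}$ is obtained from $\mathcal{T}$ by conjugating every coefficient, so $i\mapsto -i$, $\lambda\mapsto\overline{\lambda}$, and real functions are left unchanged.

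\emph{Second identity.} Under complex conjugation, $\partial_z+i\mu$ becomes $\partial_z-i\mu$. Since $c$, $k$ and $\eta$ are real, we obtain
\[
\overline{\mathcal{T}^{\overline{\lambda}}_{k,a,-\mu}}
=2\left(c\lambda-k^2\eta_z\right)(\partial_z+i\mu)+k^2(\partial_z+i\mu)^2\left(-3c^2+2\eta\right)-1 .
\]
Here we used $\overline{\overline{\lambda}}=\lambda$ and $\overline{-i\mu}=i\mu$. This is exactly $\mathcal{T}^{\lambda}_{k,a,\mu}$.

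\emph{First identity.} Under the reflection $z\mapsto -z$, the derivative $\partial_z$ becomes $-\partial_z$, the function $\eta(z)$ becomes $\eta(-z)=\eta(z)$, and $\eta_z(z)$ becomes $\eta_z(-z)=-\eta_z(z)$. Hence $\partial_z+i\mu$ becomes $-(\partial_z-i\mu)$. The square then keeps its sign, $(\partial_z+i\mu)^2\mapsto(\partial_z-i\mu)^2$, and the multiplication operator $(-3c^2+2\eta)$ is unchanged. For the first-order term we get
\[
2\left(-c\overline{\lambda}-k^2\eta_z\right)(\partial_z+i\mu)\;\longmapsto\;2\left(-c\overline{\lambda}+k^2\eta_z\right)\bigl(-(\partial_z-i\mu)\bigr)=2\left(c\overline{\lambda}-k^2\eta_z\right)(\partial_z-i\mu).
\]
So $\mathcal{T}^{-\overline{\lambda}}_{k,a,\mu}(-z)=2(c\overline{\lambda}-k^2\eta_z)(\partial_z-i\mu)+k^2(\partial_z-i\mu)^2(-3c^2+2\eta)-1$. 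Taking complex conjugates sends $\overline{\lambda}\mapsto\lambda$ and $-i\mu\mapsto i\mu$, which recovers \eqref{3.2}.

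The only delicate point is keeping track of the signs in the first-order term. The minus sign produced by reflecting $\partial_z$ must be exactly cancelled by two effects together: the parity of $\eta_z$, and the sign change $\lambda\mapsto-\overline{\lambda}$. The computation above shows that these signs do combine correctly. We would also remark that the identities hold equally on the domain $H^2(\mathbb{T})$, because both $R$ and complex conjugation preserve $H^2(\mathbb{T})$.
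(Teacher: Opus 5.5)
Your proof is correct, and it is the same direct verification the paper intends; the paper gives no argument beyond calling the identities ``straightforward.'' You also make the notation precise: $\mathcal{T}(-z)$ is conjugation by the reflection $R$, and $\overline{\mathcal{T}}$ is coefficient-wise conjugation. With that reading, both identities follow from the reality of $c$ and $\eta$ together with the evenness of $\eta$, and your sign bookkeeping in the first-order term is right.
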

We now initiate the study of the $L^2(\mathbb{T})$-kernel of the operator $\mathcal{T}^{\lambda}_{k, a, \mu}$ for $\mu \in\left(-\frac{1}{2}, \frac{1}{2}\right]$ and $|a|$ sufficiently small. From the above lemma, it suffices to consider $\mu \in\left[0, \frac{1}{2}\right]$. Since $k$ is fixed, in the subsequent discussion, we denote $\mathcal{T}^{\lambda}_{k, a, \mu}$ by $\mathcal{T}^{\lambda}_{a, \mu}$.

\section{Unperturbed operators}\label{sec unper}
We first consider the case $a=0$, which corresponds to the trivial solution $\eta=0$. A direct computation using Fourier series yields that
\begin{equation}\label{4.1}
\mathcal{T}^{\lambda}_{0, \mu}\mathrm{e}^{i n z}=\left(\frac{2\lambda}{\sqrt3 k}i(n+\mu)+(n+\mu)^2-1\right)\mathrm{e}^{i n z}=0,
\end{equation}
for all $n\in \mathbb{Z}$ and $\mu \in \left[0, \frac{1}{2}\right]$. The kernel of $\mathcal{T}^{\lambda}_{0, \mu}$ is thus non-trivial when
\begin{equation}\label{4.2}
\lambda=i\frac{\sqrt3 k}{2}\left(n+\mu-\frac{1}{n+\mu}\right)=:i\Omega_{n, 0, \mu},\quad n\in \mathbb{Z},
\end{equation}
and hence the trivial solution $\eta=0$ of \eqref{1.1} is spectrally stable with respect to square integrable perturbations, as expected. Moreover, for sufficiently small $|a|$, the values $\lambda$ given in \eqref{4.2} will bifurcate to leave imaginary axis only when there is a collision on imaginary axis, thereby inducing instability. Therefore, it is crucial to determine the precise locations of these values $i \Omega_{n, 0, \mu}$, particularly their multiplicities. 
Now we summarize in the following lemma the possible collisions of the values $i\Omega_{n, 0, \mu}$.
\begin{lemma}\label{lem4.1}
For $\mu\in[0,\frac{1}{2}]$, the values $i\Omega_{n, 0, \mu}$ have the following properties:
\begin{enumerate}
\item For $\mu=0$, the collision occurs only at $\Omega_{-1, 0, 0}=\Omega_{1, 0, 0}=0;$
\item\label{4.1.2} For $\mu\in(0,\frac{1}{2}]$, there exist
    collisions of values $i\Omega_{0, 0, \frac{-n-\sqrt{n^2-4}}{2}}$ and $i\Omega_{n, 0, \frac{-n-\sqrt{n^2-4}}{2}}$ for $n\leq-3$.
\end{enumerate}
\end{lemma}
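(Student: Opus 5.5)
The plan is to reduce every collision to an elementary algebraic condition on the Bloch frequencies and then solve it in integers. Set $f(x)=x-\frac1x$ for $x\neq 0$, so that $\Omega_{n,0,\mu}=\frac{\sqrt3 k}{2}f(n+\mu)$ by \eqref{4.2}. When $n+\mu=0$ (that is, $n=0$ and $\mu=0$), \eqref{4.1} gives the symbol $-1\neq0$. Hence that Fourier mode never contributes to the kernel and can be discarded.

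\textbf{Collision condition.} A collision means $f(x)=f(y)$ with $x=n+\mu$, $y=m+\mu$ and $n\neq m$, so $x\neq y$. Factoring,
\[
f(x)-f(y)=(x-y)\Bigl(1+\frac{1}{xy}\Bigr),
\]
so a collision happens exactly when $xy=-1$, i.e.\ $(n+\mu)(m+\mu)=-1$. In particular $x$ and $y$ have opposite signs, and after relabelling we may take $m+\mu>0>n+\mu$. (Equivalently, $f$ is strictly increasing on each of $(-\infty,0)$ and $(0,\infty)$, so collisions can only pair a positive argument with a negative one.)

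\textbf{Case $\mu=0$.} The condition becomes $nm=-1$ in integers, which forces $\{n,m\}=\{1,-1\}$. Then $\Omega_{1,0,0}=\Omega_{-1,0,0}=f(1)\cdot\frac{\sqrt3k}{2}=0$, which proves item (1).

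\textbf{Case $\mu\in(0,\frac12]$.} We have $m\ge 0$ and $n\le -1$.
\begin{itemize}
\item First suppose $m\ge1$. Then $m+\mu\ge1$, so $|n+\mu|\le 1$, and with $n\le-1$ this forces $n=-1$. The condition reads $(m+\mu)(1-\mu)=1$. Since $1-\mu\ge\frac12$, we get $m+\mu\le 2$, hence $m=1$. But then $(1+\mu)(1-\mu)=1-\mu^2=1$ gives $\mu=0$, which is excluded.
\item Therefore $m=0$, and the condition becomes $\mu(n+\mu)=-1$, i.e.
\[
\mu^2+n\mu+1=0,\qquad \mu=\frac{-n\pm\sqrt{n^2-4}}{2}.
\]
A real root requires $|n|\ge2$. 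The product of the roots is $1$, so the larger root is $\ge1$ and is always excluded.
\item For $n=-2$ both roots equal $1$, so there is nothing in $(0,\frac12]$. For $n\le-3$ the smaller root is
\[
\mu=\frac{-n-\sqrt{n^2-4}}{2}=\frac{2}{-n+\sqrt{n^2-4}}\le\frac{2}{3+\sqrt5}<\frac12,
\]
and it is positive.
\end{itemize}
This gives exactly the collisions of $i\Omega_{0,0,\mu}$ with $i\Omega_{n,0,\mu}$ for $n\le-3$ at the stated $\mu$, which proves item (2).

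There is no real obstacle here. The only points needing care are the exclusion of the mode $n+\mu=0$, and the bookkeeping showing that the case $m\ge1$ yields nothing in $(0,\frac12]$, so that the root $\mu$ is read off correctly.
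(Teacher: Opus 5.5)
Your proof is correct and follows essentially the same route as the paper: both reduce a collision to $(n+\mu)(m+\mu)=-1$, then split into the case where the positive-argument partner is the $0$-mode (giving $\mu^2+n\mu+1=0$, with only the smaller root admissible, for $n\le-3$) and the case where it is some $m\ge1$ (forcing $n=-1$, $m=1$, $\mu=0$). Your sign and magnitude argument from $xy=-1$ replaces the paper's appeal to the monotone ordering of the $\Omega_{n,0,\mu}$, and you explicitly exclude the mode $n+\mu=0$, a point the paper leaves implicit.
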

\begin{proof} Assume that there exists some $\mu_0 \in[0,\frac{1}{2}]$ such that $\Omega_{n, 0, \mu_0}=\Omega_{m, 0, \mu_0}$ for $m, n\in \mathbb{Z}$. From \eqref{4.2}, we get that
\begin{equation}\label{4.2a}
\left(n+\mu_0\right)\left(m+\mu_0\right)=-1.
\end{equation}
For $\mu_0=0$, it follows from \eqref{4.2a} that $n m=-1$ and thus the collision occurs at $\Omega_{-1, 0, 0}=\Omega_{1, 0, 0}=0$.

The monotonicity of the function $f(x)=x-\frac{1}{x}$ yields that
$$\cdots<\Omega_{-3, 0, \mu}<\Omega_{-2, 0, \mu}<0<\Omega_{1, 0, \mu}<\Omega_{2, 0, \mu}<\Omega_{3, 0, \mu}<\cdots.$$
Now we consider $\mu_0 \in(0,\frac{1}{2}]$. As $\Omega_{-1, 0, \mu_0}=-1+\mu_0-\frac{1}{-1+\mu_0}>0$ and $\Omega_{0, 0, \mu_0}=\mu_0-\frac{1}{\mu_0}<0$, there are two types of possible collisions: of $i\Omega_{0, 0, \mu_0}$ and $i\Omega_{n, 0, \mu_0}, n\leq -2$, and of $i\Omega_{-1, 0, \mu_0}$ and $i\Omega_{n, 0, \mu_0}, n\geq 1$.
\begin{enumerate}
\item\label{m=0}{For the collisions of $i\Omega_{0, 0, \mu_0}$ and $i\Omega_{n, 0, \mu_0}, n\leq -2$, we get from \eqref{4.2a} that $n+\mu_0=-\frac{1}{\mu_0}\leq -2$, which implies $n<-2$. Moreover, \eqref{4.2a} reduces to $\mu_0^2+n \mu_0+1=0$, so that
\begin{equation}\label{4.2b}
0< \mu_0=\frac{-n\pm\sqrt{n^2-4}}{2}\leq\frac{1}{2}.
\end{equation}
\begin{enumerate}
\item For $0< \mu_0=\frac{-n+\sqrt{n^2-4}}{2}\leq\frac{1}{2}$, which implies $0<\sqrt{n^2-4}\leq n+1$. Hence $-1<n<-2$, which is a contradiction;
\item For $0< \mu_0=\frac{-n-\sqrt{n^2-4}}{2}\leq\frac{1}{2}$, which implies $ -n-1\leq\sqrt{n^2-4}$. Then we get $n\leq-3$, which yields the property \eqref{4.1.2}.
\end{enumerate}
}
\item\label{m=1}{For the collisions of $i\Omega_{-1, 0, \mu_0}$ and $i\Omega_{n, 0, \mu_0}, n\geq 1$, we find that $n+\mu_0=\frac{-1}{-1+\mu_0}\in (1,2]$. Hence, $n$ can only equal $1$, and substituting this into \eqref{4.2a} yields $\mu_0=0$. This leads to a contradiction, thereby completing the proof.}
\end{enumerate}
\end{proof}
The above lemma shows that any of these collisions may lead to unstable solutions. Here, we are concerned analytically with the relatively manageable case of modulational stability/instability, i.e., we will consider small values of $\mu>0$ for $i\Omega_{n, a, \mu}$ in the vicinity of the origin in $\mathbb{C}$.
\section{Modulational Stability}\label{sec spectra}
In this section, we investigate the modulational stability of the wave $\eta$. First,
we aim to track how the values $\Omega_{\pm 1, a, 0}$ bifurcate from the origin for $|(a, \mu)|\ll1$. Note that the operator $\mathcal{T}^{\lambda}_{a, \mu}$ is a perturbation of $\mathcal{T}^{\lambda}_{0, 0}$, and that
$$
\left\|\mathcal{T}^{\lambda}_{a, \mu}-\mathcal{T}^{\lambda}_{0, 0}\right\|_{H^2(\mathbb{T})\rightarrow L^2(\mathbb{T})}=O(|a|+|\mu|),
$$
holds uniformly in operator norm as $a, \mu\rightarrow 0$. From Lemma \ref{lem4.1}, the two-dimensional generalized kernel for $\mathcal{T}^{0}_{0, 0}$ can be continued into a two-dimensional critical subspace
\begin{equation*}
\Sigma_{0, \mu} = \ker\left(\mathcal{T}^{i \Omega_{1, 0, \mu}}_{0, \mu}\right)\oplus \ker\left(\mathcal{T}^{i \Omega_{-1, 0, \mu}}_{0, \mu}\right),
\end{equation*}
which admits a $\mu$-independent orthogonal basis
\begin{equation}\label{5.1}
\varphi_1(z)=\cos(z), \quad \varphi_2(z)=\sin(z).
\end{equation}
Accordingly, for $|(a, \mu)|\ll1$ there can be only two values $\lambda=i \Omega_{\pm 1, a, \mu}$ in a sufficiently small neighborhood of the origin such that the operator $\mathcal{T}^{\lambda}_{a, \mu}$ is non-invertible.

According to Kato's analytic perturbation theory for operators, the functions
$(a, \mu)\mapsto\Omega_{\pm 1, a, \mu}$, are analytic in $(a, \mu)$ for $|(a, \mu)|\ll1$ and limit back to $\Omega_{\pm 1, 0, 0}=0$ as $(a, \mu)\rightarrow(0, 0)$. Furthermore, there exists a two-dimensional critical subspace
\begin{equation*}
\Sigma_{a, \mu} = \ker\left(\mathcal{T}^{i \Omega_{1, a, \mu}}_{a, \mu}\right)\oplus \ker\left(\mathcal{T}^{i \Omega_{-1, a, \mu}}_{a, \mu}\right),
\end{equation*}
which is an analytical continuation of that found at $a=0$ above. Now, we give the main result of the present paper.
\begin{theorem}\label{the5.1}
A sufficiently small $2\pi/k$-periodic traveling wave of \eqref{1.1} is modulationally stable.
\end{theorem}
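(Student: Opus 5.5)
Since modulational stability concerns only $\lambda$ near $0$ and $\mu$ small, and the preceding discussion shows that for $|(a,\mu)|\ll 1$ the only values $\lambda$ near the origin at which $\mathcal{T}^{\lambda}_{a,\mu}$ fails to be invertible are the two values $i\Omega_{\pm1,a,\mu}$ continuing from $0$, the proof reduces to showing that these two values are purely imaginary. I would do this with a Lyapunov--Schmidt/projection argument onto the two-dimensional critical subspace $\Sigma_{a,\mu}$. Concretely, first compute a basis $\{\phi_1(a,\mu),\phi_2(a,\mu)\}$ of $\Sigma_{a,\mu}$ by expanding around $\varphi_1=\cos z$, $\varphi_2=\sin z$ from \eqref{5.1}. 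The natural choice is $\phi_1=\partial_a\eta$ normalized and $\phi_2\propto\partial_z\eta$, which at $\mu=0$ come from differentiating the profile equation \eqref{2.1} in $a$ and $z$. With \eqref{2.4}--\eqref{2.6} these are explicit up to $O(a^2)$ and can then be corrected in $\mu$.

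Next, write the eigenvalue problem for $\mathcal{T}^{\lambda}_{a,\mu}$ restricted to $\Sigma_{a,\mu}$ as a $2\times2$ matrix problem $\det(\mathbf{T}_{a,\mu}+\lambda\mathbf{I}_{a,\mu})=0$. Here
\[
(\mathbf{T}_{a,\mu})_{jl}=\langle \mathcal{T}^{0}_{a,\mu}\phi_l,\phi_j\rangle,\qquad (\mathbf{I}_{a,\mu})_{jl}=\langle 2c(\partial_z+i\mu)\phi_l,\phi_j\rangle,
\]
using that $\mathcal{T}^{\lambda}_{a,\mu}$ is affine in $\lambda$. I would expand these entries in $(a,\mu)$. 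The operators are expanded as $\mathcal{T}^{0}_{a,\mu}=L_0+i\mu L_1+\mu^2L_2+a(\cdot)+a^2(\cdot)+\dots$, inserting $\eta$, $c$ from Lemma \ref{le1}, and the Fourier inner products of the low modes $\cos z,\sin z,\cos 2z,\dots$ are computed up to order $\mu^2$, $a^2$, $a^2\mu$ and $\mu^3$ as needed. The symmetry property \eqref{3.3} and the evenness of $\eta$ give the parity structure: the diagonal entries are real and even in $\mu$, and the off-diagonal entries are odd in $\mu$ and purely imaginary. This lets one factor $\mu$ out of the determinant and write it as a quadratic in $\lambda/\mu$ (equivalently, in $\lambda=i\mu\omega$) with real coefficients.

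The roots are then $\lambda=i\mu\,\omega_\pm(a,\mu)$, where the $\omega_\pm$ solve a real quadratic whose discriminant has the form $\Delta(a,\mu)=\Delta_0\mu^{2}+\Delta_1 a^2+O(\cdots)$, with the leading terms determined by the computed coefficients. Stability means $\Delta\ge 0$ for all small $(a,\mu)$, with equality at most on a thin set. The main obstacle is exactly this step: computing the $a^2$ coefficient correctly, since it requires the second-order corrections $A_0,A_2,c_2$ and the $O(a)$ corrections to the basis, and checking that its sign makes the discriminant nonnegative. I expect cancellations in which the $(u_x)^2$ term contributes but does not change the sign, as suggested by Remark \ref{re2}.

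If $\Delta>0$, both roots are real, so $\lambda$ is purely imaginary. Combined with the uniqueness of the two spectral values near $0$, this shows that $\mathcal{T}^{\lambda}_{a,\mu}$ is invertible for all $|(\lambda,\mu)|\ll1$ with $\operatorname{Re}\lambda\neq0$. Remark \ref{rem3.1} and Lemma \ref{lem3.1} then convert this into the definition of modulational stability, which proves Theorem \ref{the5.1}.
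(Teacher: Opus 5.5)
Your outline is the paper's proof. The paper also projects onto the span of $\varphi_1=-a^{-1}\partial_z\eta$ and $\varphi_2=\partial_a\eta$ (expanded in \eqref{5.3}--\eqref{5.4}) and expands $\mathcal{T}^{\lambda}_{a,\mu}=T_{0,a}+i\mu T_{1,a}-\frac{\mu^2}{2}T_{2,a}+O(\mu^3)$ via commutators with $z$. It then forms the $2\times 2$ matrix $\mathbf{B}^{\lambda}_{a,\mu}$, writes $\det\mathbf{B}^{\lambda}_{a,\mu}=\mu^2Q(a,\mu,X)$ with $\lambda=i\mu X$, and decides stability from the discriminant of the real quadratic $Q$. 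The only procedural difference is that the paper does not correct the basis in $\mu$. It uses $\varphi_j(\cdot;a,0)$ throughout and cites earlier work for the claim that the $\mu$-variation only affects higher-order terms.

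Two small points in your structural step. First, what factors out is $\mu^2$, not $\mu$. Second, parity alone does not give the factorization. You also need $\mathcal{T}^{0}_{a,0}\partial_z\eta=0$ and $\langle\partial_z^2\eta,\partial_z\eta\rangle=0$, so that the translation-mode diagonal entry is $O(\mu^2)$ along $\lambda=i\mu X$. This matters because the other diagonal entry equals $-\frac12a^2k^4$ at $\mu=0$ and does not vanish.

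The genuine gap is that the proposal stops where the theorem actually lives. The discriminant is degenerate at the origin. For $a=0$ the two roots are $X_\pm=\sqrt3k(1\pm\frac{\mu}{2})+O(\mu^2)$, read off from \eqref{4.2}, so $\mathcal{D}_{0,\mu}=\frac{16\mu^2}{3k^2}$ to leading order, which vanishes as $\mu\to0$. Hence for $0<\mu\ll|a|$ the sign of $\mathcal{D}_{a,\mu}$ is the sign of its $a^2$-coefficient. None of the structural facts you invoke constrain that sign: not parity, not the symmetry \eqref{3.3}, not the reduction to a real quadratic. Section \ref{Another model} runs the identical template on \eqref{6.1} and gets $4\mu^2+a^2k^4(1-\gamma)$, whose $a^2$-coefficient is negative for $\gamma>1$. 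So ``checking that its sign makes the discriminant nonnegative,'' plus an expectation drawn from Remark \ref{re2}, leaves the content of Theorem \ref{the5.1} unproved.

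The paper supplies this step by explicit computation. It evaluates $\mathcal{T}^{\lambda}_{a,\mu}$ on $1,\cos z,\sin z,\cos 2z,\sin 2z$ through orders $a^2$, $\mu a^2$, $\mu^2a^2$ (Appendix \ref{app C}) and assembles \eqref{5.10}. This yields \eqref{5.11}, in which:
\begin{itemize}
\item the $\mu a^2$ terms in the coefficient of $\lambda$ cancel;
\item $d_0=-4+3a^2k^4+\cdots$;
\item $d_2=\frac{4}{3k^2}+\frac{2a^2k^2}{3}+\cdots$.
\end{itemize}
The result is $\mathcal{D}_{a,\mu}=\frac{16\mu^2}{3k^2}+\frac{16a^2k^2}{3}+O(a^2(\mu^2+a^2))>0$.

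Part of this can be had without expansion. The entry $\langle\mathcal{T}^{\lambda}_{a,0}\partial_a\eta,\partial_a\eta\rangle=-\frac12a^2k^4+\cdots$ follows from $\mathcal{T}^{0}_{a,0}\partial_a\eta=6ck^2(\partial_ac)\,\eta_{zz}$ and $c_2=\frac{k^3}{4\sqrt3}$. However, the $a^2$ and $\mu a^2$ corrections of the off-diagonal entries enter the $a^2$-coefficient at the same order. So the full second-order computation, or some substitute that pins down this sign, is needed to finish the proof.
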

\begin{proof}
To track the critical values $\lambda(a, \mu)=i\Omega_{\pm1, a, \mu}$ for $|(a, \mu)|\ll1$, we project the operator equation $\mathcal{T}^{\lambda}_{a, \mu} v=0$ onto the two-dimensional critical subspace $\Sigma_{a, \mu}$ defined above. More precisely, we will construct a suitable basis $\left\{\varphi_j(z; a, \mu)\right\}_{j = 1, 2}$ for $\Sigma_{a, \mu}$ and compute the $2 \times 2$ corresponding matrix
\begin{equation}\label{5.2}
\mathbf{B}_{a, \mu}^{\lambda} =\left(\frac{\left\langle\mathcal{T}^{\lambda}_{a, \mu} \varphi_i, \varphi_j\right\rangle}{\left\langle\varphi_i, \varphi_i\right\rangle}\right)_{i, j=1,2}.
\end{equation}
We now calculate the expansions of the basis $\{\varphi_1, \varphi_2\}$ for $\Sigma_{a, \mu}$ in $L^2(\mathbb{T})$. For $\mu=0$ and small $a$, we use the expansions of $\eta$ and $c$ given in \eqref{2.4}-\eqref{2.5}, and have
\begin{align}
\varphi_1(z; a, 0)&=-\frac{1}{a}\left(\partial_z \eta\right)(z)=\sin z+a k^2 \sin 2z+\frac{21}{16}a^2 k^4\sin 3z+O\left(a^3\right),\label{5.3}\\
\varphi_2(z; a, 0)&=\left(\partial_a \eta\right)(z)=\cos z+a k^2\left(\cos 2z-1\right)+\frac{21}{16}a^2 k^4\cos 3z+O\left(a^3\right).\label{5.4}
\end{align}

These functions provide an asymptotic extension for the ($\mu$-independent) basis of the critical subspace $\Sigma_{0, \mu}$ provided in \eqref{5.1}. By spectral perturbation theory, it follows that the functions $\varphi_j(\cdot; a, \mu)$ continue into a $\mu$-dependent basis for the critical subspace $\Sigma_{a, \mu}$ for $|(a, \mu)|\ll 1$. Nevertheless, the discussions in \cite{Hu,Joh13} showed that the variations in the basis functions $\varphi_j(\cdot; a, \mu)$ do not play a role in the asymptotic calculation below as they contribute only to higher-order terms than what are needed here. Thus, in the following, the calculations are done with the $\mu$-independent basis $\varphi_j(\cdot; a, 0)$.

For $\mu$ and $|a|$ sufficiently small, we expand $\mathcal{T}^{\lambda}_{a, \mu}$ using Baker-Campbell-Hausdorff formula as
\begin{equation}\label{5.5}
\mathcal{T}_{a, \mu}^{\lambda}=T_{0, a}+i \mu T_{1, a}-\frac{\mu^2}{2} T_{2, a}+O\left(\mu^3\right),
\end{equation}
as $\mu\rightarrow 0$, where
\begin{align}
T_{0, a}
&:=\mathcal{T}_{a, 0}^{\lambda}=T_0
+2a k^2\left(\sin z\,\partial_z+\partial_{zz}\cos z\right) \notag\\
&\qquad\quad +a^2\left(\frac{k^3}{2\sqrt3}\lambda\partial_z
+2k^4\sin 2z\,\partial_{z}-\frac{3k^4}{2}\partial_{zz}
+k^4\partial_{zz}\cos 2z\right)+O\left(a^3\right),\label{5.6}\\[4pt]
T_{1, a}
&:=\left[\mathcal{T}_{a, 0}^{\lambda},z\right]
=\left[T_0, z\right]+2a k^2\left(\sin z+2\partial_z\cos z\right)\notag\\
&\qquad\qquad\quad +a^2\left(\frac{k^3}{2\sqrt3}\lambda
+2k^4\sin 2z-3k^4\partial_z+2k^4\partial_z\cos 2z\right)+O\left(a^3\right),\label{5.7}\\[4pt]
T_{2,a}
&:=\left[T_{1,a}, z\right]
=\left[\left[T_0, z\right], z\right]
+4a k^2\cos z
+a^2k^4\left(-3+2\cos 2z\right)+O\left(a^3\right),\label{5.8}
\end{align}
and
\begin{equation}\label{5.9}
\begin{aligned}
T_{0}&=\mathcal{T}_{0, 0}^{\lambda}=\frac{2}{\sqrt3 k}\lambda \partial_z-\partial_{zz}-1,\\
T_{1}&=\left[T_0, z\right]=\frac{2}{\sqrt3 k}\lambda-2\partial_z,\\
T_{2}&=\left[\left[T_0, z\right], z\right]=-2.
\end{aligned}
\end{equation}
The detailed calculation for \eqref{5.6}-\eqref{5.9} is provided in Appendix \ref{app B}. Note that $T_{1, a}$ and $T_{2, a}$ are well defined in $L^2(\mathbb{T})$ even though $z$ is not well defined in $L^2(\mathbb{T})$. Moreover, it is easy to get that
\begin{equation*}
\begin{aligned}
T_0\left\{\begin{array}{l}
\cos nz \\
\sin nz
\end{array}\right\}&=\mp\frac{2\lambda}{\sqrt3 k}n\left(\begin{array}{l}
\sin nz \\
\cos nz
\end{array}\right) + \left(n^{2}-1\right)\left(\begin{array}{l}
\cos nz \\
\sin nz
\end{array}\right),\\
T_1\left\{\begin{array}{l}
\cos nz  \\
\sin nz
\end{array}\right\}&=\frac{2\lambda}{\sqrt3 k}\left(\begin{array}{l}
\cos nz\\
\sin nz
\end{array}\right)\pm 2n\left(\begin{array}{l}
\sin nz\\
\cos nz
\end{array}\right),\\
T_2\left\{\begin{array}{c}
\cos nz\\
\sin nz
\end{array}\right\}  &= -2\left(\begin{array}{l}
\cos nz\\
\sin nz
\end{array}\right).\\
\end{aligned}
\end{equation*}
Now, to determine the action of the operator $\mathcal{T}_{a, \mu}^{\lambda}$ in \eqref{5.5} on the generalized kernel $\varphi_j(\cdot; a, 0)$ defined in \eqref{5.3}-\eqref{5.4}, we need to compute the following (the lengthy but straightforward derivation is provided in Appendix \ref{app C}):
\begin{equation}\label{5.9c}
\begin{aligned}
&\mathcal{T}_{a, \mu}^{\lambda} 1= -1-2ak^2\cos z-4a^2k^4\cos 2z+ i\mu\bigg[\frac{2\lambda}{\sqrt{3}k}-2a k^2\sin z+a^2\bigg( \frac{k^3\lambda}{2\sqrt{3}}\quad\quad\quad\\&-2k^4\sin 2z \bigg)\bigg]-\frac{\mu^2}{2}\left[-2+4ak^2\cos z +a^2k^4\left(-3+2\cos 2z\right)\right]+O\left(\mu^3+a^3\right),
\end{aligned}
\end{equation}
\begin{equation}\label{5.9b}
\begin{aligned}
&\mathcal{T}_{a, \mu}^{\lambda} \cos z
= -\frac{2\lambda}{\sqrt{3}\,k}\sin z-a k^2(1+3\cos 2z)+ a^2\left( -\frac{k^3\lambda}{2\sqrt{3}}\sin z- \frac{7k^4}{2}\cos 3z \right) \\
&+ i\mu\left[\frac{2\lambda}{\sqrt{3}k}\cos z+2\sin z-3a k^2\sin 2z+a^2\left( \frac{k^3\lambda}{2\sqrt{3}}\cos z+3k^4\sin z-2k^4\sin 3z \right)\right]
\end{aligned}
\end{equation}
\begin{equation*}
\begin{aligned}
&-\frac{\mu^2}{2}\left[-2\cos z+2ak^2\left(1+\cos 2z\right) +a^2k^4\left(-2\cos z+\cos 3z\right)\right]+O\left(\mu^3+a^3\right),\quad\quad\quad
\end{aligned}
\end{equation*}
\begin{equation}\label{5.9a}
\begin{aligned}
&\mathcal{T}_{a, \mu}^{\lambda} \sin z= \frac{2\lambda}{\sqrt{3}k}\cos z- 3a k^2\sin 2z+ a^2\left(\frac{k^3\lambda}{2\sqrt{3}}\cos z+ 3k^4\sin z- \frac{7k^4}{2}\sin 3z \right)\quad\\
&+ i\mu\bigg[\frac{2\lambda}{\sqrt{3}k}\sin z- 2\cos z+a k^2(1+3\cos2z)+a^2\bigg( \frac{k^3\lambda}{2\sqrt{3}}\sin z- 3k^4\cos z+ 2k^4\\&\cos 3z \bigg)\bigg]-\frac{\mu^2}{2}\left[-2\sin z+2ak^2\sin 2z +a^2k^4(-4\sin z+\sin 3z)\right]+O\left(\mu^3+a^3\right),
\end{aligned}
\end{equation}
\begin{equation}\label{5.9d}
\begin{aligned}
&\mathcal{T}_{a, \mu}^{\lambda} \cos 2z= -\frac{4\lambda}{\sqrt{3}k}\sin 2z+3\cos2z-a k^2(3\cos z+7\cos 3z)+a^2\bigg(-\frac{k^3\lambda}{\sqrt{3}}\sin 2z\\
&+6k^4\cos 2z-2k^4- 6k^4\cos 4z \bigg) + i\mu\left[\frac{2\lambda}{\sqrt{3}k}\cos 2z+4\sin 2z-a k^2\left(3\sin z\right.\right.\\
&\left.\left.+5\sin 3z\right)+a^2\left(\frac{k^3\lambda}{2\sqrt{3}}\cos 2z+6k^4\sin 2z-3k^4\sin 4z \right)\right]-\frac{\mu^2}{2}\left[-2\cos 2z\right.\\
&\left.+2ak^2\left(\cos z+\cos 3z\right)+a^2k^4\left(1-3\cos 2z+\cos 4z\right)\right]+O\left(\mu^3+a^3\right),
\end{aligned}
\end{equation}
\begin{equation}\label{5.9e}
\begin{aligned}
&\mathcal{T}_{a, \mu}^{\lambda} \sin 2z
= \frac{4\lambda}{\sqrt{3}k}\cos 2z + 3\sin 2z - a k^2\left(3\sin z + 7\sin 3z\right) + a^2\bigg(\frac{k^3\lambda}{\sqrt{3}}\cos 2z \\&+ 6k^4\sin 2z- 6k^4\sin 4z \bigg)+ i\mu\left[\frac{2\lambda}{\sqrt{3}k}\sin 2z - 4\cos 2z + a k^2\left(3\cos z + 5\cos 3z\right)\right.\\&\left.+ a^2\left(\frac{k^3\lambda}{2\sqrt{3}}\sin 2z - 6k^4\cos 2z+ 3k^4\cos 4z + k^4 \right)\right]- \frac{\mu^2}{2}\left[-2\sin 2z + 2ak^2\left(\sin z \right.\right.\\&\left.\left.+ \sin 3z\right)+ a^2k^4\left(-3\sin 2z + \sin 4z\right)\right]+O\left(\mu^3+a^3\right).
\end{aligned}
\end{equation}

Using these and \eqref{5.3}-\eqref{5.4}, we get
\begin{align*}
\left\langle\mathcal{T}_{a, \mu}^{\lambda} \varphi_1, \varphi_1\right\rangle
&= \left\langle\mathcal{T}_{a, \mu}^{\lambda} \left(\sin z+a k^2 \sin 2z+\frac{21}{16}a^2 k^4\sin 3z+O\left(a^3\right)\right),\right.\\
&\qquad\left.\sin z+a k^2 \sin 2z+\frac{21}{16}a^2 k^4\sin 3z+O\left(a^3\right)\right\rangle\\
&= i \mu \left(\frac{\lambda}{\sqrt3 k}+\frac{5 \lambda }{4\sqrt3}a^2k^3\right)+\frac{\mu^2}{2}\left(1+a^2k^4\right)+O\left(\mu^3+a^3\right),\\[6pt]
\left\langle\mathcal{T}_{a, \mu}^{\lambda} \varphi_1, \varphi_2\right\rangle
&= \left\langle\mathcal{T}_{a, \mu}^{\lambda} \left(\sin z+a k^2 \sin 2z+\frac{21}{16}a^2 k^4\sin 3z+O\left(a^3\right)\right),\right.\\
&\qquad\left. \cos z+a k^2 (\cos 2z-1)+\frac{21}{16}a^2 k^4\cos 3z+O\left(a^3\right)\right\rangle\\
&= \frac{\lambda}{\sqrt3 k}+ \frac{9\lambda}{4\sqrt3 }a^2k^3-i \mu(1+\frac{3}{2}a^2k^4)+O\left(\mu^3+a^3\right),\\[6pt]
\left\langle\mathcal{T}_{a, \mu}^{\lambda} \varphi_2, \varphi_1\right\rangle
&= \left\langle\mathcal{T}_{a, \mu}^{\lambda} \left(\cos z+a k^2 (\cos 2z-1)+\frac{21}{16}a^2 k^4\cos 3z+O\left(a^3\right)\right),\right.\\
&\qquad\left. \sin z+a k^2 \sin 2z+\frac{21}{16}a^2 k^4\sin 3z+O\left(a^3\right)\right\rangle\\
&= -\frac{\lambda}{\sqrt3k}-\frac{9\lambda}{4\sqrt3}a^2k^3+i \mu\left(1+\frac{3}{2}a^2k^4\right)+O\left(\mu^3+a^3\right),\\[6pt]
\left\langle\mathcal{T}_{a, \mu}^{\lambda} \varphi_2, \varphi_2\right\rangle
&= \left\langle\mathcal{T}_{a, \mu}^{\lambda} \left(\cos z+a k^2 (\cos 2z-1)+\frac{21}{16}a^2 k^4\cos 3z+O\left(a^3\right)\right),\right.\\
&\qquad\left. \cos z+a k^2 (\cos 2z-1)+\frac{21}{16}a^2 k^4\cos 3z+O\left(a^3\right)\right\rangle\\
&= -\frac{1}{2}a^2k^4+i \mu\left(\frac{\lambda}{\sqrt3 k}+\frac{13\lambda}{4\sqrt3}a^2k^3\right)+\frac{\mu^2}{2}\left(1+6a^2 k^4\right)+O\left(\mu^3+a^3\right).
\end{align*}
Moreover, it is easy to obtain
$$
\left\langle\varphi_1, \varphi_1\right\rangle =\frac{1+a^2k^4}{2}+O\left(a^3\right),
\left\langle\varphi_2, \varphi_2\right\rangle=\frac{1+3a^2k^4}{2}+O\left(a^3\right),\left\langle\varphi_1, \varphi_2\right\rangle =\left\langle\varphi_2, \varphi_1\right\rangle=0,
$$
as $ a\rightarrow 0$.
Using the above obtained expressions, it follows that the matrix $\mathbf{B}^{\lambda}_{a, \mu}$ in \eqref{5.2} can be expanded for sufficiently small $\mu$ and $|a|$ as
\begin{equation}\label{5.10}
\begin{aligned}
&\mathbf{B}_{a, \mu}^{\lambda}=\left(\begin{array}{cc}
0 & \frac{2\lambda}{\sqrt3 k}  \\
-\frac{2\lambda}{\sqrt3 k} & 0
\end{array}\right)+a^2\left(\begin{array}{cc}
0 & \frac{5\lambda}{2\sqrt3}k^3\\
\frac{3\lambda}{2\sqrt3}k^3 & -k^4
\end{array}\right)+2 i \mu\left(\begin{array}{cc}
\frac{\lambda}{\sqrt3 k} & -1\\
1 & \frac{\lambda}{\sqrt3 k}
\end{array}\right)\\&+i \mu a^2\left(\begin{array}{cc}
\frac{\lambda}{2\sqrt3}k^3 & -k^4\\
-3k^4 & \frac{\lambda}{2\sqrt3}k^3
\end{array}\right)+\mu^2\left(\begin{array}{cc}
1 & 0\\
0 & 1
\end{array}\right)+\mu^2 a^2\left(\begin{array}{cc}
0 & 0\\
0 & 3k^4
\end{array}\right)+O\left(\mu^3+a^3\right).
\end{aligned}
\end{equation}
The attention now is turned to the roots of the characteristic polynomial
\begin{equation}\label{5.11}
\begin{aligned}
&\det(\mathbf{B}^{\lambda}_{a, \mu})
=\left(-4\mu^2 + 3a^2 k^4 \mu^2 + 3a^4 k^8 \mu^2 + \mu^4 + 3a^2 k^4 \mu^4\right) \\
&+i \left(-\frac{8\mu}{\sqrt{3}k} + \frac{17 a^4 k^7 \mu}{2\sqrt{3}} + \frac{4\mu^3}{\sqrt{3}k} + \frac{7 a^2 k^3 \mu^3}{\sqrt{3}} + \frac{\sqrt{3}}{2}a^4 k^7 \mu^3\right)\lambda \\
&+\left(\frac{4}{3k^2} + \frac{2a^2k^2}{3} - \frac{5a^4k^6}{4} - \frac{4\mu^2}{3k^2} - \frac{2}{3}a^2k^2\mu^2 - \frac{1}{12}a^4k^6\mu^2\right)\lambda^2\\&:=b_0(a, \mu)+ ib_1(a, \mu) \lambda+b_2(a, \mu) \lambda^2,
\end{aligned}
\end{equation}
where the coefficient functions $b_j$ ($j=0, 1, 2$) are real-valued functions and depend smoothly on the parameters $a$ and $\mu$. In particular, $b_0$ and $b_2$ are even functions in $\mu$, while $b_1$ is an odd function in $\mu$. We further note that $b_j(a, \mu)=d_j(a, \mu) \mu^{2-j}, j=0, 1, 2$, where the coefficient functions $d_j$ ($j=0, 1, 2$) are real-valued functions and depend smoothly on $a$ and $\mu$ for $|(a, \mu)|\ll1$. Setting $\lambda=i \mu X$, it follows that
$$
\det(\mathbf{B}^{\lambda}_{a, \mu})=\mu^2\left(d_0(a, \mu)-d_1(a, \mu)X-d_2(a, \mu)X^2\right)=:\mu^2 Q(a, \mu, X).
$$
Therefore, the underlying wave is modulationally unstable if the polynomial $Q$ possesses roots with non-zero imaginary parts, and modulationally stable if $Q$ admits two distinct real roots.

To determine the reality of these roots, it suffices to analyze the discriminant $\mathcal{D}_{a, \mu}$ of $Q$. Consequently, the asymptotically small background periodic traveling waves $\eta(\cdot; a, k)$ are modulationally stable provided that $\mathcal{D}_{a, \mu}>0$ for $0<|\mu|\ll1$ and modulationally unstable if $\mathcal{D}_{a, \mu}<0$ for $0<|\mu|\ll1$. Via a Mathematica calculation, the discriminant $\mathcal{D}_{a, \mu}$ can be directly expanded as
$$
\mathcal{D}_{a, \mu}=\frac{16 \mu^2}{3 k^2}+\frac{16 a^2 k^2}{3}+O\left(a^2(\mu^2+a^2)\right).
$$
We note that $\mathcal{D}_{a, \mu}>0$ for all $0<|\mu|\ll1$ and sufficiently small $|a|$, which implies the modulational stability of the background wave. This completes the proof of Theorem \ref{the5.1}.
\end{proof}
\begin{remark}\label{re1}
In a recent paper \cite{Locke}, the authors showed that a $2 \pi$-periodic wave of the local shallow water wave model \eqref{1.1} (taking $k=1$ in the present paper) is spectrally stable in the subspace $L_0^2(\mathbb{T})=\left\{f \in L^2(\mathbb{T}): \int_0^{2 \pi} f(z) d z=0\right\}$ of $L^2(\mathbb{T})$ with respect to co-periodic perturbations $(\mu=0)$. This result agrees with what we obtained in the present paper. Specifically, taking $k=1$ and $\mu=0$, we observe from \eqref{5.11} that $\det(\mathbf{B}^{\lambda}_{a, \mu})=\lambda^2(\frac{4}{3}+\frac{2a^2}{3}-\frac{5a^4}{4})$ and thus zero is a root of multiplicity two. Then the periodic traveling wave is spectrally stable under co-periodic perturbations. Furthermore, for the case of the long-wavelength perturbations $(0<\mu\leq1)$, we get from Theorem \ref{the5.1} that a sufficiently small $2 \pi$-periodic wave of \eqref{1.1} is modulationally stable.
\end{remark}
\begin{remark}\label{re2}
In a recent paper \cite{Bhavna}, the authors considered the dispersion-generalized Ostrovsky (gOst) equation
\begin{equation}\label{5.13}
\left(v_t+\beta\mathcal{M}v_x+(v^2)_x\right)_x=\alpha v,\quad \alpha>0, \; \beta\in\mathbb{R} \setminus \{0\},
\end{equation}
where $\mathcal{M}$ is a Fourier multiplier operator required to satisfy Hypothesis 1.1 proposed therein. By taking $\beta = -c^2$ and $\mathcal{M} = 1$--noting that $\mathcal{M}=1$ readily satisfies Hypothesis 1.1--the gOst equation reduces to
\begin{equation}\label{5.12}
\left(v_t-c^2v_x+(v^2)_x\right)_x=\alpha v,\quad \alpha>0.
\end{equation}
For this special case, we obtain from \cite{Bhavna} that
\begin{equation*}
c_p(k)=\beta m(k)+\frac{\alpha}{k^2}=-c^2+\frac{\alpha}{k^2},\quad
c_g(k)=\beta (m(k)+k m^{\prime}(k))-\frac{\alpha}{k^2}=-c^2-\frac{\alpha}{k^2},
\end{equation*}
which yields that
\begin{equation*}
c^{\prime}_g(k)=2\frac{\alpha}{k^3}>0,\quad
c_p(k)-c_p(2k)=\frac{3\alpha}{4k^2}>0,
\end{equation*}
and hence the modulational instability index
\begin{equation*}
\Delta(k)=(c_p(k)-c_p(2k))\frac{dc_g(k)}{dk}>0.
\end{equation*}
Theorem 1.3 in \cite{Bhavna} then implies that a $2 \pi / k$-periodic traveling wave of \eqref{5.12} with sufficiently small amplitude is modulationally stable. Comparing \eqref{5.12} with the reformulation of the local model \eqref{1.1}, namely \eqref{1.4}, we find that the main difference lies in the presence of the term $(u_x)^2$ on the right-hand side of \eqref{1.4}. The above discussion together with our main result (Theorem \ref{the5.1}), reveals that the inclusion of the term $(u_x)^2$ does not affect the modulational stability.
\end{remark}
\section{Another extended Hunter-Saxton equation}\label{Another model}
In this section, we analyze the modulational instability of the extended Hunter-Saxton equation \eqref{6.1}. We first follow a procedure similar to that in Section \ref{sec expression} to obtain the existence for periodic traveling waves of \eqref{6.1}, and their small-amplitude expansion can also be given as follows.
\begin{lemma}\label{le6}
For all wavenumbers $k>0$, there exists a one parameter family of solutions of \eqref{6.1} given by $u(x, t)=w(a; k)\left(k(x-c(a; k)t)\right)$ for $a\in \mathbb{R}$ and $|a|$ sufficiently small; $w(a; k)(\cdot)$ is $2 \pi$-periodic, even and smooth in its argument, and $c(a; k)$ is even in $a$; $w(a; k)(\cdot)$ and $c(a; k)$ depend analytically on $a$ and $k$. Moreover, as $a \to 0$,
\begin{equation}\label{6.2}
w(a; k)(z)= a \cos(z)+a^{2}\left(A_0+A_2\cos(2z)\right)+a^{3} A_3\cos(3z)+O(a^4),
\end{equation}
\begin{equation}\label{6.3}
c(a; k)=c_0+a^2 c_2+O(a^4),
\end{equation}
where
\begin{equation}\label{6.4}
\begin{array}{ll}
\displaystyle A_0=-\frac{k^2}{4},\quad A_2=-A_0,\quad A_3=\frac{(7+\gamma)k^4}{64}, \\
\displaystyle c_0=\frac{1}{k^2}, \quad c_2=\frac{(1-\gamma)k^2}{8}.
\end{array}
\end{equation}
\end{lemma}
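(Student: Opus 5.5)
The plan is to follow the scheme of Section \ref{sec expression} and Appendix \ref{app A}: first get existence by a Lyapunov--Schmidt reduction, then obtain the expansion by substituting a power series in $a$ and matching coefficients.

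\textbf{Profile equation and bifurcation point.} Substituting $u(x,t)=w(k(x-ct))$ into \eqref{6.1} gives $u_{tx}=-ck^2w''$. So $w$ must satisfy
\begin{equation*}
G(w,c;k):=-ck^2w''+k^2ww''+\tfrac{k^2}{2}(w')^2-\tfrac{\gamma k^4}{2}w''(w')^2-w=0 .
\end{equation*}
I will regard $G$ as a map $H^2_{\mathrm{even}}(\mathbb{T})\times\mathbb{R}^+\times\mathbb{R}^+\to L^2(\mathbb{T})$. It is analytic, since it is polynomial in $w,w',w''$ and $H^2(\mathbb{T})$ is an algebra that embeds into $C^1$. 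We have $G(0,c;k)=0$, and $\partial_wG(0,c;k)=-ck^2\partial_z^2-1$ acts on $\cos(nz)$ by multiplication with $ck^2n^2-1$. Hence the kernel on even functions is one-dimensional, equal to $\operatorname{span}(\cos z)$, exactly when $c=c_0=1/k^2$. The range is the $L^2$-orthogonal complement of $\cos z$.

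\textbf{Existence and symmetry.} Write $w=a\cos z+\tilde w$ with $\tilde w\perp\cos z$. I will solve the complementary projection for $\tilde w=\tilde w(a,c)$ by the implicit function theorem, which applies because $\partial_wG(0,c_0;k)$ is invertible on $(\cos z)^\perp$. What remains is the scalar bifurcation equation $\langle G(a\cos z+\tilde w,c),\cos z\rangle=0$. After dividing by $a$, its $c$-derivative at $(0,c_0)$ equals $\langle k^2\cos z,\cos z\rangle\neq0$, so the implicit function theorem again gives an analytic $c(a;k)$.

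Evenness of $c$ in $a$ follows from the translation $z\mapsto z+\pi$, which maps $a\mapsto -a$ and preserves $G$. Smoothness of $w$ follows from the bootstrap already used for \eqref{2.1}: for small $a$ the coefficient $-ck^2+k^2w-\frac{\gamma k^4}{2}(w')^2$ of $w''$ stays close to $-1$, hence is nonvanishing, and one can solve for $w''$ and iterate.

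\textbf{Expansion.} Insert $w=a\cos z+a^2w_2+a^3w_3+O(a^4)$ and $c=c_0+a^2c_2+O(a^4)$, and write $L:=-\partial_z^2-1=\partial_wG(0,c_0;k)$.
\begin{itemize}
\item At order $a^2$ the equation reads $Lw_2=-k^2\big(\cos z(\cos z)''+\tfrac12\sin^2 z\big)=\tfrac{k^2}{4}+\tfrac{3k^2}{4}\cos 2z$. Since $L$ acts as $-1$ on constants and as $3$ on $\cos 2z$, this gives $A_0=-k^2/4$ and $A_2=k^2/4$.
\item At order $a^3$ the equation is $Lw_3=c_2k^2(\cos z)''-k^2\big(w_2(\cos z)''+\cos z\,w_2''+(\cos z)'w_2'\big)+\tfrac{\gamma k^4}{2}(\cos z)''\sin^2z$. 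The right-hand side is a combination of $\cos z$ and $\cos3z$.
\item The solvability condition, namely that the $\cos z$ coefficient vanishes, determines $c_2$. The $\cos 3z$ coefficient divided by $L\cos3z=8$ gives $A_3$. The targets are $c_2=(1-\gamma)k^2/8$ and $A_3=(7+\gamma)k^4/64$.
\end{itemize}

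\textbf{Main obstacle.} The only delicate step is the order-$a^3$ bookkeeping. Two things must be handled correctly there: the cubic $\gamma$-term, with $-\cos z\sin^2 z=-\tfrac14(\cos z-\cos3z)$, and the normalization $\langle w,\cos z\rangle=a$. The normalization forces $w_3$ to have no $\cos z$ component, and this is what makes $A_3$ well defined. I would verify these coefficients by a symbolic computation, as in Appendix \ref{app A}.
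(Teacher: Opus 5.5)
Your proposal is correct and follows the route the paper indicates: a Lyapunov--Schmidt reduction at $c_0=1/k^2$, then order-by-order matching as in Appendix~\ref{app A}. Carrying out your order-$a^3$ formula gives right-hand side $\bigl(-c_2k^2+\tfrac{(1-\gamma)k^4}{8}\bigr)\cos z+\tfrac{(7+\gamma)k^4}{8}\cos 3z$, which yields exactly $c_2=(1-\gamma)k^2/8$ and $A_3=(7+\gamma)k^4/64$; the only slip is cosmetic, since with the paper's normalized inner product your condition $\tilde w\perp\cos z$ means $\langle w,\cos z\rangle=a/2$ rather than $a$.
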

Linearizing equation \eqref{6.1} about $w$ given in \eqref{6.2}, 
we obtain the linear evolution equation
\begin{equation}\label{6.5}
k v_{t z}-ck^2v_{z z}+k^2(w v)_{z z}-k^2w_z v_z-\gamma k^4 w_z w_{z z} v_z-\frac{\gamma}{2}k^4(w_z)^2v_{z z}-v=0.
\end{equation}
For $v(z, t)=\mathrm{e}^{\frac{\lambda}{k}t} \widetilde{v}(z)$, $\lambda\in\mathbb{C}$ and $\widetilde{v}(z) \in L^2(\mathbb{R})$, we arrive at
\begin{equation}\label{6.6}
\mathcal{A}^{\lambda}_{k, a} \widetilde{v}:= \left[(\lambda-k^2 w_z-\gamma k^4w_z w_{z z})\p_z+k^2 \p_{z z}\left(-c+w\right)-\frac{\gamma k^4}{2}(w_z)^2 \p_{z z}-1\right]\widetilde{v}=0,
\end{equation}
where $\mathcal{A}^{\lambda}_{k, a}:H^2(\mathbb{R})\rightarrow L^2(\mathbb{R})$ is considered as a closed, densely defined linear operator. Using Floquet theory, the invertibility problem of the operator $\mathcal{A}^{\lambda}_{k, a}$ on $L^2(\mathbb{R})$ reduces to a one-parameter family of invertibility problem of Bloch operators on $L^2(\mathbb{T})$. The linear operator $\mathcal{A}^{\lambda}_{k, a}$ is invertible with bounded inverse on $L^2(\mathbb{R})$ if and only if the Bloch operators
\begin{equation}\label{6.7}
\begin{aligned}
\mathcal{A}^{\lambda}_{k, a, \mu} :&=\left(\lambda-k^2 w_z-\gamma k^4w_z w_{z z}\right)\left(\p_z+i \mu\right)+k^2\left(\p_{z}+i\mu\right)^2\left(-c+w\right)\\&-\frac{\gamma k^4}{2}(w_z)^2\left(\p_{z}+i\mu\right)^2-1,
\end{aligned}
\end{equation}
acting in $L^2(\mathbb{T})$ with domain $H^2(\mathbb{T})$ are invertible for all $\mu \in(-\frac{1}{2}, \frac{1}{2}]$. One can easily establish the symmetry property
$$\mathcal{A}^{\lambda}_{k, a, \mu}(z)=\overline{\mathcal{A}^{-\overline{\lambda}}_{k, a, \mu}(-z)}=\overline{\mathcal{A}^{\overline{\lambda}}_{k, a, -\mu}(z)},$$
which implies that it suffices to consider $\mu \in\left[0, \frac{1}{2}\right]$ when studying the $L^2(\mathbb{T})$-kernel of the operators $\mathcal{A}^{\lambda}_{k, a, \mu}$, and that the set of $\lambda$ where these operators fail to be invertible is symmetric with respect to reflections about the imaginary axis. Since $k$ is fixed, we denote $\mathcal{A}^{\lambda}_{k, a, \mu}$ by $\mathcal{A}^{\lambda}_{a, \mu}$.

A direct computation yields that for all $n\in \mathbb{Z}$ and $\mu \in \left[0, \frac{1}{2}\right]$, the kernel of $\mathcal{A}^{\lambda}_{0, \mu}$ is non-trivial when
\begin{equation}\label{6.8}
\lambda=i\left(n+\mu-\frac{1}{n+\mu}\right)=\frac{2}{\sqrt{3} k}i\Omega_{n, 0, \mu},
\end{equation}
where $\Omega_{n, 0, \mu}$ is defined in \eqref{4.2}. Then the collisions of $\lambda$  are consistent with those described in Lemma \ref{lem4.1}, with all collisions occurring away from the origin in the complex plane, except for the case $i\Omega_{-1, 0, 0}=i\Omega_{1, 0, 0}=0$. On the other hand, we can also get
$$
\left\|\mathcal{A}^{\lambda}_{a, \mu}-\mathcal{A}^{\lambda}_{0, 0}\right\|_{H^2(\mathbb{T})\rightarrow L^2(\mathbb{T})}=O(|a|+|\mu|),
$$
which holds uniformly in operator norm as $|a|, \mu \rightarrow 0$. Accordingly, for $|(a, \mu)|\ll1$ there can be only two values $\lambda=i \Omega_{\pm 1, a, \mu}$ in a sufficiently small neighborhood of the origin such that the operator $\mathcal{A}^{\lambda}_{a, \mu}$ is non-invertible. Then, proceeding similarly to the proof of Theorem \ref{the5.1}, we obtain the expansion of the discriminant $\mathcal{D}_{a, \mu}$ as
\begin{equation}\label{6.9}
\mathcal{D}_{a, \mu}=4 \mu^2+ a^2k^4\left(1-\gamma\right)+O\left(a^2\left(\mu^2+a^2\right)\right),
\end{equation}
and a parallel discussion gives the following result.
\begin{theorem}\label{the6.1}
A sufficiently small $2\pi/k$-periodic traveling wave of \eqref{6.1} is modulationally unstable if $\gamma >1$, while it is modulationally stable if $\gamma <1$.
\end{theorem}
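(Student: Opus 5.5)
We follow the proof of Theorem \ref{the5.1} step by step, with $\mathcal{T}^{\lambda}_{a,\mu}$ replaced by $\mathcal{A}^{\lambda}_{a,\mu}$ and $\eta$ by $w$ from Lemma \ref{le6}.

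\textbf{Step 1: critical subspace and basis.} By \eqref{6.8}, the only collision near the origin is $i\Omega_{-1,0,0}=i\Omega_{1,0,0}=0$. Kato's perturbation theory therefore gives a two-dimensional critical subspace $\Sigma_{a,\mu}$ for $|(a,\mu)|\ll1$. As before, we take the $\mu$-independent basis
\[
\varphi_1=-\tfrac1a\partial_z w,\qquad \varphi_2=\partial_a w .
\]
These are expanded through $O(a^2)$ using \eqref{6.2} and \eqref{6.4}. For example,
\[
\varphi_1=\sin z+\tfrac{k^2}{2}a\sin 2z+3A_3a^2\sin 3z+O(a^3).
\]
The corrections from the $\mu$-dependence of the basis enter only at higher order, as in \cite{Hu,Joh13}.

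\textbf{Step 2: expansion of the Bloch operator.} We write
\[
\mathcal{A}^{\lambda}_{a,\mu}=A_{0,a}+i\mu A_{1,a}-\tfrac{\mu^2}{2}A_{2,a}+O(\mu^3),
\]
where $A_{1,a}=[A_{0,a},z]$ and $A_{2,a}=[A_{1,a},z]$. Each of these is expanded to $O(a^2)$, keeping the new $\gamma$-terms $\gamma k^4 w_zw_{zz}$ and $\tfrac{\gamma k^4}{2}w_z^2$. These terms are $O(a^2)$ and contribute only through $\cos 2z$, $\sin 2z$ and constant Fourier modes. We then apply the operator to $1,\cos z,\sin z,\cos 2z,\sin 2z,\cos3z,\sin3z$ and assemble the $2\times2$ matrix $\mathbf{B}^\lambda_{a,\mu}$ as in \eqref{5.2}.

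\textbf{Step 3: reduction to a quadratic.} The symmetry $\mathcal{A}^{\lambda}_{a,\mu}(z)=\overline{\mathcal{A}^{-\bar\lambda}_{a,\mu}(-z)}$ and the parity structure give
\[
\det\mathbf{B}^{\lambda}_{a,\mu}=b_0+ib_1\lambda+b_2\lambda^2,\qquad b_j=d_j\mu^{2-j},
\]
with real-valued $d_j$. Substituting $\lambda=i\mu X$ yields $\det\mathbf{B}^{\lambda}_{a,\mu}=\mu^2 Q(a,\mu,X)$, where $Q$ is a real quadratic in $X$. The discriminant of $Q$ then has the expansion \eqref{6.9}:
\[
\mathcal{D}_{a,\mu}=4\mu^2+a^2k^4(1-\gamma)+O\big(a^2(\mu^2+a^2)\big).
\]

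\textbf{Step 4: case analysis on $\gamma$.} If $\gamma<1$, both leading terms of $\mathcal{D}_{a,\mu}$ are positive and dominate the remainder. Hence $\mathcal{D}_{a,\mu}>0$, $Q$ has two real roots, every $\lambda$ near the origin is purely imaginary, and the wave is modulationally stable.

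If $\gamma>1$, fix $a\ne0$ small and take $0<\mu\ll|a|$, specifically $4\mu^2<\tfrac12 a^2k^4(\gamma-1)$. Then
\[
\mathcal{D}_{a,\mu}\le 4\mu^2-a^2k^4(\gamma-1)+Ca^2(\mu^2+a^2)<0 .
\]
So $Q$ has complex-conjugate roots $X$ with nonzero imaginary part, and $\lambda=i\mu X$ has $\operatorname{Re}\lambda\ne0$. The corresponding $\mathcal{A}^\lambda_{a,\mu}$ is non-invertible, which is modulational instability. (The borderline case $\gamma=1$ is not claimed.)

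\textbf{Main obstacle.} The hard part is Steps 2–3: verifying the $\gamma$-dependence of the $a^2$ coefficient in $\mathcal{D}_{a,\mu}$. The $\gamma$-contributions come from several places: $A_3$ and $c_2$ in the basis and wave speed, and the new operator terms. These contributions must combine to exactly $-\gamma a^2k^4$, so all $a^2$ terms in $\langle\mathcal{A}\varphi_i,\varphi_j\rangle$ have to be tracked carefully, with symbolic computation as a check.

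As a sanity check, at $\mu=0$ the discriminant reduces to $\mathcal{D}_{a,0}=a^2k^4(1-\gamma)+O(a^4)$. Its $a^2$ coefficient can be read off from $d_1^2+4d_0d_2$ evaluated at leading order, where we expect $d_0\sim -k^4a^2(1-\gamma)$-type corrections, and this can be checked independently of the full expansion.
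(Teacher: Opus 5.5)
Your proposal is correct and follows the paper's argument step for step: the same two-dimensional critical subspace with basis $-\frac{1}{a}\partial_z w$, $\partial_a w$, the same commutator (Baker--Campbell--Hausdorff) expansion of $\mathcal{A}^{\lambda}_{a,\mu}$, the reduction $\det\mathbf{B}^{\lambda}_{a,\mu}=\mu^2 Q(a,\mu,X)$ with $\lambda=i\mu X$, and the sign of the discriminant \eqref{6.9}. Your Step 4, with the explicit regime $4\mu^2<\frac12 a^2k^4(\gamma-1)$ for $\gamma>1$, spells out the ``parallel discussion'' the paper leaves implicit, and, as in the paper, everything rests on the (unshown) symbolic computation producing \eqref{6.9}, which you take as given.
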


\appendix
\section{Small-amplitude expansion}\label{app A}
In this section, we give the details on small-amplitude expansion of \eqref{2.1}. For a fixed $k>0$, the solutions $\big(\eta(a; k)(\cdot), c(a, k)\big)$ are analytic in $a$ for $|a|\ll1$ and hence can be expanded as
\begin{equation}\label{2.7}
\left\{
\begin{aligned}
&\eta(a;k)(z)= a \cos(z)+a^{2}w_2(z)+a^{3} w_3(z)+O(a^4),\\
&c(a;k)=c_0(k)+c_2(k)a^2 +O(a^4).
\end{aligned}
\right.
\end{equation}
Substituting $\eta$ and $c$ from \eqref{2.7} into \eqref{2.1}, and expanding in terms of the order of $a$, we obtain
\begin{equation}\label{2.8}
\begin{aligned}
&\left(-3c_0^{2}k^2 \cos(z)+\cos(z)\right)a=0,\\
&\left(3c_0^{2} k^2w_2^{\prime \prime}+2k^2\cos^{2}(z)-k^2\sin^{2}(z)+w_2\right)a^2=0, \\
&\left(3c_0^{2} k^2w_3^{\prime \prime}-6c_0c_2k^2\cos(z)-2k^2\cos(z)\left(w_2^{\prime \prime}-w_2\right)+2k^2\sin(z)w_2^{\prime}+w_3\right)a^3=0,
\end{aligned}
\end{equation}
where the first identity in \eqref{2.8} follows directly from \eqref{2.3a} and we have
\begin{equation}\label{2.9}
\begin{array}{ll}
\displaystyle w_2^{\prime \prime}+w_2+\frac{k^2}{2}+\frac{3k^2}{2}\cos(2z)=0, \\
\displaystyle w_3^{\prime \prime}+w_3-2\sqrt3 c_2 k\cos(z)-2k^2\cos(z)\left(w_2^{\prime \prime}-w_2\right)+2k^2\sin(z)w_2^{\prime}=0.
\end{array}
\end{equation}
Solving the above equations yields
\begin{equation}\label{2.10}
c_2=\frac{k^3}{4\sqrt3}, \quad w_2=\frac{k^2}{2}\left(\cos(2z)-1\right), \quad w_3=\frac{7k^4}{16}\cos(3z).
\end{equation}
\section{Calculation of equations \eqref{5.6}-\eqref{5.9}}\label{app B}
From $\mathcal{T}^{\lambda}_{k, a, \mu}$ given in \eqref{3.2}, we obtain
\begin{equation}\label{B.1}
\mathcal{T}^{\lambda}_{a, 0}=2c \lambda\p_z-2k^2 \eta_z\p_z -3c^2k^2\p_{zz}+2k^2\p_{zz}\eta-1.
\end{equation}
Substituting \eqref{2.4}-\eqref{2.6} into \eqref{B.1}, we observe that for any $f\in H^2(\mathbb{T})$
\begin{equation}\label{B.2}
\begin{aligned}
T_{0, a}f&:=\mathcal{T}^{\lambda}_{a, 0}f=\left(2c \lambda\p_z-2k^2 \eta_z\p_z -3c^2k^2\p_{zz}+2k^2\p_{zz}\eta-1\right)f\\&
=\left(2\left(\frac{1}{\sqrt3 k}+\frac{k^3 a^2}{4\sqrt3}\right)\lambda\p_z-2k^2\left(-a\sin z-a^2k^2\sin 2z\right)\p_z-3\left(\frac{1}{3k^2}\right.\right.\\
&\left.\left.+\frac{k^2a^2}{6}\right)k^2\p_{zz}+2k^2\p_{zz}\left(a\cos z+\frac{a^2k^2}{2}\cos 2z-\frac{a^2k^2}{2}\right)-1\right)f+O\left(a^3\right)\\&
=\left(\frac{2}{\sqrt3 k}\lambda\p_z-\p_{zz}-1+2ak^2(\sin z\p_{z}+\p_{zz}\cos z)\right.\\
&\left.+a^2(\frac{k^3}{2\sqrt3}\lambda\p_z+2k^4\sin 2z\p_z-\frac{3k^4}{2}\p_{zz}+k^4\p_{zz}\cos 2z)\right)f+O\left(a^3\right)\\
&=:T_0 f+\left(2ak^2(\sin z\p_{z}+\p_{zz}\cos z)+a^2\left(\frac{k^3}{2\sqrt3}\lambda\p_z+2k^4\sin 2z\p_z\right.\right.\\
&\left.\left.-\frac{3k^4}{2}\p_{zz}+k^4\p_{zz}\cos 2z\right)\right)f+O\left(a^3\right),
\end{aligned}
\end{equation}
where $T_0=\frac{2}{\sqrt3 k}\lambda\p_z-\p_{zz}-1$. This gives the equation \eqref{5.6}. To get \eqref{5.7}, we find that
\begin{equation}\label{B.3}
\begin{aligned}
T_1 f&:=\left[T_0, z\right]f
=\left[\frac{2}{\sqrt3 k}\lambda\partial_z-\partial_{zz}-1, z\right]f
=\left(\frac{2\lambda}{\sqrt3 k}-2\partial_z\right)f,\quad
\end{aligned}
\end{equation}
\begin{equation}\label{B.4}
\begin{aligned}
&\left[\left(2ak^2\left(\sin z\p_{z}+\p_{zz}\cos z\right)+a^2\left(\frac{k^3}{2\sqrt3}\lambda\p_z+2k^4\sin 2z\p_z-\frac{3k^4}{2}\p_{zz}+k^4\p_{zz}\cos 2z\right)\right),z\right]f\\&
=2ak^2\left[\sin z\p_{z}+\p_{zz}\cos z,z\right]f+a^2\left[\frac{k^3\lambda}{2\sqrt3}\p_z+2k^4\sin 2z\p_z-\frac{3k^4}{2}\p_{zz}+k^4\p_{zz}\cos 2z,z\right]f\\
&=2ak^2\left(\sin z\p_{z}(zf)-z\sin z\p_{z}f+\p_{zz}(z f\cos z)-z\p_{zz}(f\cos z)\right)\\
&+a^2\left(\left[(\frac{k^3\lambda}{2\sqrt3}+2k^4\sin 2z)\p_z,z\right]f-\frac{3k^4}{2}\left[\p_{zz},z\right]f+k^4\left[\p_{zz}\cos 2z,z\right]f\right)\\&
=\left(2ak^2\left(\sin z+2\p_{z}\cos z\right)+a^2\left(\frac{k^3\lambda}{2\sqrt3}+2k^4\sin 2z-3k^4\p_z+2k^4\p_z\cos 2z\right)\right)f.
\end{aligned}
\end{equation}
Then \eqref{B.2}-\eqref{B.4} give \eqref{5.7}. To obtain \eqref{5.8}, we find that
\begin{equation}\label{B.5}
T_2 f:=\left[T_1, z\right]f
=\left[\frac{2\lambda}{\sqrt3 k}-2\p_z, z\right]f
=-2f,
\end{equation}
\begin{equation}\label{B.6}
\begin{aligned}
&\left[2ak^2\left(\sin z+2\p_{z}\cos z\right)+a^2\left(\frac{k^3\lambda}{2\sqrt3}+2k^4\sin 2z-3k^4\p_z+2k^4\p_z\cos 2z\right),z\right]f\\
&=2ak^2\left[\sin z+2\partial_{z}\cos z,z\right]f+a^2\bigg(\left[\frac{k^3\lambda}{2\sqrt3}+2k^4\sin 2z,z\right]f-3k^4\left[\partial_z,z\right]f\\
&\quad+2k^4\left[\partial_z\cos 2z, z\right]\bigg)f\\
&=2ak^2\cdot 2\left(\partial_z(zf\cos z)-z\partial_z(f\cos z)\right)+a^2\left(-3k^4f+2k^4f\cos 2z\right)\\
&=\left(4ak^2\cos z+a^2k^4\left(-3+2\cos 2z\right)\right)f.
\end{aligned}
\end{equation}
Then \eqref{B.5} and \eqref{B.6} give \eqref{5.8}.
\section{Calculation of equations \eqref{5.9c}-\eqref{5.9e}}\label{app C}
From $T_{0,a}, T_{1,a}, T_{2,a}$ given in \eqref{5.6}-\eqref{5.8}, we obtain that
\begin{equation*}
\begin{aligned}
T_{0,a} 1
&=\frac{2}{\sqrt3 k}\lambda \p_z 1-\p_{zz}1-1
+2a k^2\left(\sin z\,\partial_z 1+\partial_{zz}\cos z\right) \notag\\
&+a^2\left(\frac{k^3}{2\sqrt3}\lambda\partial_z 1
+2k^4\sin 2z\,\partial_{z} 1-\frac{3k^4}{2}\partial_{zz} 1
+k^4\partial_{zz}\cos 2z\right)+O\left(a^3\right)\\
&=-1-2ak^2\cos z-4a^2k^4\cos 2z+O\left(a^3\right),
\end{aligned}
\end{equation*}
\begin{equation*}
\begin{aligned}
T_{1,a} 1
&=\frac{2}{\sqrt3 k}\lambda-2\p_z1+2a k^2\left(\sin z+2\partial_z\cos z\right)\notag\\
&+a^2\left(\frac{k^3}{2\sqrt3}\lambda
+2k^4\sin 2z-3k^4\partial_z 1+2k^4\partial_z\cos 2z\right)+O\left(a^3\right)\quad\quad\quad\quad\\&
=\frac{2\lambda}{\sqrt{3}k}-2a k^2\sin z+a^2\left(\frac{k^3\lambda}{2\sqrt{3}}-2k^4\sin 2z \right)+O\left(a^3\right),
\end{aligned}
\end{equation*}
\begin{equation*}
\begin{aligned}
T_{2,a} 1
&=-2+4ak^2\cos z +a^2k^4\left(-3+2\cos 2z\right)+O\left(a^3\right),\quad\quad\quad\quad\quad\quad\quad
\end{aligned}
\end{equation*}
then according to equation \eqref{5.5}, we have
\begin{equation}\label{C.1}
\begin{aligned}
\mathcal{T}_{a, \mu}^{\lambda} 1&=T_{0, a}1+i \mu T_{1, a}1-\frac{\mu^2}{2} T_{2, a}1+O\left(\mu^3\right)\\&
= -1-2ak^2\cos z-4a^2k^4\cos 2z\\
&+ i\mu\left[\frac{2\lambda}{\sqrt{3}k}-2a k^2\sin z+a^2\left( \frac{k^3\lambda}{2\sqrt{3}}-2k^4\sin 2z \right)\right]\quad\quad\quad\quad\quad\quad\quad\\&-\frac{\mu^2}{2}\left[-2+4ak^2\cos z +a^2k^4(-3+2\cos 2z)\right]+O\left(\mu^3+a^3\right).
\end{aligned}
\end{equation}
Similarly,
\begin{equation*}
\begin{aligned}
T_{0,a} \cos z
&= \frac{2}{\sqrt{3}k}\lambda \partial_z \cos z - \partial_{zz} \cos z - \cos z + 2ak^2\left(\sin z \partial_z \cos z + \partial_{zz} \cos^2 z\right)+ a^2\\&\left( \frac{k^3}{2\sqrt{3}}\lambda \partial_z \cos z + 2k^4 \sin 2z \partial_z \cos z - \frac{3k^4}{2}\partial_{zz} \cos z + k^4 \partial_{zz}\left(\cos 2z \cos z\right) \right)+O\left(a^3\right) \\
&= -\frac{2}{\sqrt{3}k}\lambda \sin z + \cos z - \cos z + 2ak^2\left(-\sin^2 z + 2\sin^2 z - 2\cos^2 z\right)\\
&+ a^2\bigg(-\frac{k^3}{2\sqrt{3}}\lambda \sin z - 2k^4 \sin 2z \sin z + \frac{3k^4}{2}\cos z + k^4\left(-4\cos 2z \cos z\right.\\
&\quad \left.+ 4\sin 2z \sin z - \cos 2z \cos z\right)\bigg)+O\left(a^3\right)\\&
=-\frac{2}{\sqrt{3}k}\lambda \sin z +2ak^2\left(\sin^2 z - 2\cos^2 z\right) + a^2\bigg( -\frac{k^3}{2\sqrt{3}}\lambda \sin z+ \frac{3k^4}{2}\cos z\\
&\quad\quad - 2k^4 \sin z \sin 2z+ k^4\left( 4\sin z \sin 2z-5\cos 2z \cos z\right)\bigg)+O\left(a^3\right)\\
&=-\frac{2}{\sqrt{3}k}\lambda \sin z +2ak^2\left(\frac{1-\cos 2z}{2}-2\frac{1+\cos 2z}{2}\right)+ a^2\left(-\frac{k^3}{2\sqrt{3}}\lambda \sin z\right.\\
&\left.+\frac{3k^4}{2}\cos z +2k^4 \sin z \sin 2z-5k^4\cos 2z \cos z\right)+O\left(a^3\right)\\
&=-\frac{2}{\sqrt{3}k}\lambda \sin z +2ak^2\left(-\frac{1}{2}-\frac{3}{2}\cos 2z\right)+ a^2\left( -\frac{k^3}{2\sqrt{3}}\lambda \sin z+ \frac{3k^4}{2}\cos z\right.\\
&\left.\quad\quad +2k^4\frac{\cos z-\cos3z}{2}-5k^4\frac{\cos z+\cos3z}{2}\right)+O\left(a^3\right)\\
&=-\frac{2\lambda}{\sqrt{3}\,k}\sin z-a k^2\left(1+3\cos 2z\right)+ a^2\left( -\frac{k^3\lambda}{2\sqrt{3}}\sin z- \frac{7k^4}{2}\cos 3z \right)+O\left(a^3\right),
\end{aligned}
\end{equation*}
\begin{equation*}
\begin{aligned}
T_{1,a} \cos z
&=\frac{2}{\sqrt3 k}\lambda\cos z-2\p_z\cos z+2a k^2\left(\sin z\cos z+2\partial_z\cos^2 z\right)+a^2\bigg(\frac{k^3}{2\sqrt3}\lambda\cos z
\\&+2k^4\sin 2z\cos z-3k^4\partial_z \cos z+2k^4\partial_z\left(\cos 2z\cos z\right)\bigg)+O\left(a^3\right)\\&
=\frac{2\lambda}{\sqrt{3}k}\cos z+2\sin z+2a k^2\left(\sin z\cos z-4\cos z\sin z\right)+a^2\bigg(\frac{k^3\lambda}{2\sqrt{3}}\cos z\quad\quad\quad\\
&+2k^4\sin 2z\cos z+3k^4\sin z+2k^4\left(-2\sin 2z \cos z-\cos 2z \sin z\right)\bigg)+O\left(a^3\right)
\end{aligned}
\end{equation*}
\begin{equation*}
\begin{aligned}
\quad\quad\quad&=\frac{2\lambda}{\sqrt{3}k}\cos z+2\sin z-6a k^2\sin z\cos z+a^2\bigg( \frac{k^3\lambda}{2\sqrt{3}}\cos z+3k^4\sin z\\
&-2k^4\sin 2z\cos z-2k^4\cos 2z \sin z\bigg)+O\left(a^3\right)\\&
=\frac{2\lambda}{\sqrt{3}k}\cos z+2\sin z-3a k^2\sin 2z+a^2\left( \frac{k^3\lambda}{2\sqrt{3}}\cos z+3k^4\sin z\right.\\
&\left.-2k^4\frac{\sin 3z+\sin z}{2}-2k^4\frac{\sin 3z-\sin z}{2}\right)+O\left(a^3\right)\\
&=\frac{2\lambda}{\sqrt{3}k}\cos z+2\sin z-3a k^2\sin 2z+a^2\bigg( \frac{k^3\lambda}{2\sqrt{3}}\cos z+3k^4\sin z\\&-2k^4\sin 3z \bigg)+O\left(a^3\right),
\end{aligned}
\end{equation*}
\begin{equation*}
\begin{aligned}
T_{2,a} \cos z
&=-2\cos z+4a k^2\cos^2 z+a^2k^4\left(-3\cos z+2\cos 2z\cos z\right)+O\left(a^3\right)\\&
=-2\cos z+4ak^2\frac{1+\cos 2z}{2} +a^2k^4\left(-3\cos z+2\frac{\cos 3z+\cos z}{2}\right)+O\left(a^3\right)\quad\quad\\&
=-2\cos z+2ak^2\left(1+\cos 2z\right) +a^2k^4\left(-2\cos z+\cos 3z\right)+O\left(a^3\right),
\end{aligned}
\end{equation*}
then we have
\begin{equation}\label{C.2}
\begin{aligned}
\mathcal{T}_{a, \mu}^{\lambda} \cos z&=T_{0, a}\cos z+i \mu T_{1, a}\cos z-\frac{\mu^2 }{2} T_{2, a}\cos z+O\left(\mu^3\right)\\&
=-\frac{2\lambda}{\sqrt{3}\,k}\sin z-a k^2\left(1+3\cos 2z\right)+ a^2\left( -\frac{k^3\lambda}{2\sqrt{3}}\sin z- \frac{7k^4}{2}\cos 3z \right)\quad\quad\\
&+ i\mu\left[\frac{2\lambda}{\sqrt{3}k}\cos z+2\sin z-3a k^2\sin 2z+a^2\bigg( \frac{k^3\lambda}{2\sqrt{3}}\cos z+3k^4\sin z\right.\\&\left.-2k^4\sin 3z \bigg)\right]-\frac{\mu^2}{2}\left[-2\cos z+2ak^2\left(1+\cos 2z\right)+a^2k^4 \left(-2\cos z\right.\right.\\&\left.\left.+\cos 3z\right)\right]+O\left(\mu^3+a^3\right).
\end{aligned}
\end{equation}
Moreover,
\begin{equation*}
\begin{aligned}
T_{0,a} \sin z
&= \frac{2\lambda}{\sqrt{3}k} \partial_z \sin z- \partial_{zz} \sin z - \sin z + 2ak^2\left(\sin z \partial_z \sin z+ \partial_{zz} \left(\cos z\sin z\right)\right)+ a^2\\&\left(\frac{k^3\lambda}{2\sqrt{3}} \partial_z \sin z+ 2k^4 \sin 2z \partial_z \sin z - \frac{3k^4}{2}\partial_{zz} \sin z + k^4 \partial_{zz}\left(\cos 2z \sin z\right) \right)+O\left(a^3\right) \\
&=\frac{2\lambda}{\sqrt{3}k} \cos z + \sin z - \sin z + 2ak^2\left[\sin z\cos z +\left(-\cos z \sin z-2\sin z\cos z\right.\right.\\
&\left.\left.-\cos z\sin z\right)\right]+ a^2\bigg(\frac{k^3\lambda}{2\sqrt{3}} \cos z+2 k^4\sin 2z \cos z+ \frac{3k^4}{2}\sin z
+k^4\left(-4\cos 2z\right.\\&\left.\sin z- 4\sin 2z \cos z - \cos 2z \sin z\right) \bigg)+O\left(a^3\right)\\&
=\frac{2\lambda}{\sqrt{3}k}\cos z +2ak^2\left(-3\sin z\cos z\right) + a^2\bigg( \frac{k^3\lambda}{2\sqrt{3}} \cos z+ \frac{3k^4}{2}\sin z-2k^4 \cos z \sin 2z\\&+ k^4\left(-5\cos 2z \sin z\right)\bigg)+O\left(a^3\right)
\end{aligned}
\end{equation*}
\begin{equation*}
\begin{aligned}
\quad\quad\quad\quad&=\frac{2\lambda}{\sqrt{3}k} \cos z-3ak^2\sin 2z+ a^2\left(\frac{k^3\lambda}{2\sqrt{3}} \cos z+ \frac{3k^4}{2}\sin z -2k^4 \frac{\sin 3z+\sin z}{2}\right.\\&\left.-5k^4\frac{\sin 3z-\sin z}{2}\right)+O\left(a^3\right)\\&
=\frac{2\lambda}{\sqrt{3}k} \cos z -3ak^2\sin 2z+ a^2\left(\frac{k^3\lambda}{2\sqrt{3}} \cos z+ 3k^4\sin z -\frac{7}{2}k^4 \sin 3z\right)+O\left(a^3\right),
\end{aligned}
\end{equation*}
\begin{equation*}
\begin{aligned}
T_{1,a} \sin z
&=\frac{2\lambda}{\sqrt3 k}\sin z-2\p_z\sin z+2a k^2\left(\sin^2 z+2\partial_z\left(\cos z\sin z\right)\right)\notag +a^2\bigg(\frac{k^3\lambda}{2\sqrt3}\sin z\\&
+2k^4\sin 2z\sin z-3k^4\partial_z \sin z+2k^4\partial_z\left(\cos 2z\sin z\right)\bigg)+O\left(a^3\right)\\
&=\frac{2\lambda}{\sqrt{3}k}\sin z-2\cos z+2a k^2\left(\sin^2 z+2\left(-\sin^2 z+\cos^2 z\right)\right)+a^2\bigg(\frac{k^3\lambda}{2\sqrt{3}}\sin z\\&+2k^4\sin 2z\sin z-3k^4\cos z+2k^4\left(-2\sin 2z \sin z+\cos 2z \cos z\right)\bigg)+O\left(a^3\right)\\
&=\frac{2\lambda}{\sqrt{3}k}\sin z-2\cos z+2a k^2\left(-\sin^2 z+2\cos^2 z\right)+a^2\bigg(\frac{k^3\lambda}{2\sqrt{3}}\sin z-3k^4\cos z\\&-2k^4\sin 2z\sin z+2k^4\cos 2z \cos z\bigg)+O\left(a^3\right)\\&
=\frac{2\lambda}{\sqrt{3}k}\sin z-2\cos z+2a k^2\left(-\frac{1-\cos 2z}{2}+2\frac{1+\cos 2z}{2}\right)+a^2\left(\frac{k^3\lambda}{2\sqrt{3}}\sin z\right.\\&\left.-3k^4\cos z-2k^4\frac{\cos z-\cos 3z}{2}+2k^4\frac{\cos 3z+\cos z}{2}\right)+O\left(a^3\right)\\&
=\frac{2\lambda}{\sqrt{3}k}\sin z-2\cos z+a k^2\left(1+3\cos 2z\right)+a^2\bigg(\frac{k^3\lambda}{2\sqrt{3}}\sin z-3k^4\cos z\\&+2k^4\cos 3z\bigg)+O\left(a^3\right),
\end{aligned}
\end{equation*}
\begin{equation*}
\begin{aligned}
T_{2,a} \sin z
&=-2\sin z+4a k^2\cos z\sin z+a^2k^4\left(-3\sin z+2\cos 2z\sin z\right)+O\left(a^3\right)\quad\quad\quad\\&
=-2\sin z+2ak^2\sin 2z +a^2k^4\left(-3\sin z+2\frac{\sin 3z-\sin z}{2}\right)+O\left(a^3\right)\\&
=-2\sin z+2ak^2\sin 2z +a^2k^4\left(-4\sin z+\sin 3z\right)+O\left(a^3\right),
\end{aligned}
\end{equation*}
then we have
\begin{equation}\label{C.3}
\begin{aligned}
\mathcal{T}_{a, \mu}^{\lambda} \sin z&=T_{0, a}\sin z+i \mu T_{1, a}\sin z-\frac{\mu^2 }{2} T_{2, a}\sin z+O\left(\mu^3\right)\\&
=\frac{2\lambda}{\sqrt{3}k}\cos z- 3a k^2\sin 2z+ a^2\left( \frac{k^3\lambda}{2\sqrt{3}}\cos z+ 3k^4\sin z- \frac{7k^4}{2}\sin 3z \right)+\\
& i\mu\left[\frac{2\lambda}{\sqrt{3}k}\sin z- 2\cos z+a k^2\left(1+3\cos2z\right)+a^2\bigg(\frac{k^3\lambda}{2\sqrt{3}}\sin z- 3k^4\cos z\right.\\&\left.+ 2k^4\cos 3z \bigg)\right]-\frac{\mu^2}{2}\left[-2\sin z+2ak^2\sin 2z +a^2k^4\left(-4\sin z+\sin 3z\right)\right]\\&+O\left(\mu^3+a^3\right).
\end{aligned}
\end{equation}
Furthermore, from
\begin{equation*}
\begin{aligned}
T_{0,a} \cos 2z
&= \frac{2\lambda}{\sqrt{3}k} \partial_z \cos 2z - \partial_{zz} \cos 2z - \cos 2z + 2ak^2\left(\sin z \partial_z \cos 2z + \partial_{zz} \left(\cos z\cos 2z\right)\right)+\\& a^2\left(\frac{k^3\lambda}{2\sqrt{3}} \partial_z \cos 2z + 2k^4 \sin 2z \partial_z \cos 2z
- \frac{3k^4}{2}\partial_{zz} \cos 2z + k^4 \partial_{zz}(\cos^2 2z)\right) +O\left(a^3\right)\\
&= -\frac{2\lambda}{\sqrt{3}k}\cdot 2\sin 2z +4 \cos 2z - \cos 2z + 2ak^2\left(-2\sin z\sin 2z + \left(-\cos z\cos 2z\right.\right.\\&\left.\left.
+2\sin z \cdot 2\sin 2z-4\cos z\cos 2z\right)\right) + a^2\bigg(-\frac{k^3\lambda}{\sqrt{3}} \sin 2z - 4k^4 \sin^2 2z+ 6k^4\cos 2z\\&+ 8k^4\left(\sin^2 2z - \cos^2 2z\right)\bigg)+O\left(a^3\right)\\
&=-\frac{4\lambda}{\sqrt{3}k} \sin 2z +3\cos 2z + 2ak^2\left(2\sin z\sin 2z -5\cos z\cos 2z\right)+ a^2\\&\left(-\frac{k^3\lambda}{\sqrt{3}} \sin 2z- 4k^4 \sin^2 2z+ 6k^4\cos 2z+ 8k^4\left(\sin^2 2z - \cos^2 2z\right)\right)+O\left(a^3\right)\\&
=-\frac{4\lambda}{\sqrt{3}k} \sin 2z +3\cos 2z + 2ak^2\left(2\frac{\cos z-\cos 3z}{2} -5\frac{\cos z+\cos 3z}{2}\right)\\&+ a^2\left(-\frac{k^3\lambda}{\sqrt{3}} \sin 2z+4k^4 \sin^2 2z+ 6k^4\cos 2z- 8k^4 \cos^2 2z\right)+O\left(a^3\right)\\&
=-\frac{4\lambda}{\sqrt{3}k} \sin 2z +3\cos 2z - ak^2\left(3\cos z+7\cos 3z\right) + a^2\bigg(-\frac{k^3\lambda}{\sqrt{3}}\sin 2z\\&+ 6k^4\cos 2z-2k^4- 6k^4 \cos 4z\bigg)+O\left(a^3\right),
\end{aligned}
\end{equation*}
\begin{equation*}
\begin{aligned}
T_{1,a} \cos 2z
&=\frac{2\lambda}{\sqrt3 k}\cos 2z-2\p_z\cos 2z+2a k^2\left(\sin z\cos 2z+2\partial_z(\cos z\cos 2z)\right)+a^2\\&\left(\frac{k^3\lambda}{2\sqrt3}\cos 2z
+2k^4\sin 2z\cos 2z-3k^4\partial_z \cos 2z+2k^4\partial_z\cos^2 2z\right)+O\left(a^3\right)\\&
=\frac{2\lambda}{\sqrt{3}k}\cos 2z+4\sin 2z+2a k^2\left(\sin z\cos 2z+2\left(-\sin z\cos 2z-2\cos z\sin 2z\right)\right)+\\&a^2\left(\frac{k^3\lambda}{2\sqrt{3}}\cos 2z+2k^4\sin 2z\cos 2z+6k^4\sin 2z+2k^4(2\cos 2z)(-2\sin 2z)\right)+O\left(a^3\right)\\&
=\frac{2\lambda}{\sqrt{3}k}\cos 2z+4\sin 2z+2a k^2\left(-\sin z\cos 2z-4\cos z\sin 2z\right)+a^2\\&\left(\frac{k^3\lambda}{2\sqrt{3}}\cos 2z+6k^4\sin 2z+2k^4\sin 2z\cos 2z-8k^4\cos 2z\sin 2z\right)+O\left(a^3\right)\\
&=\frac{2\lambda}{\sqrt{3}k}\cos 2z+4\sin 2z+2a k^2\left(-\frac{\sin 3z-\sin z}{2}-4\frac{\sin 3z+\sin z}{2}\right) +a^2\\&\left(\frac{k^3\lambda}{2\sqrt{3}}\cos 2z+6k^4\sin 2z-6k^4\cos 2z\sin 2z\right)+O\left(a^3\right)\\&
=\frac{2\lambda}{\sqrt{3}k}\cos 2z+4\sin 2z+a k^2\left(-3\sin z-5\sin 3z\right)+a^2\bigg(\frac{k^3\lambda}{2\sqrt{3}}\cos 2z
\end{aligned}
\end{equation*}
\begin{equation*}
\begin{aligned}
&+6k^4\sin 2z-3k^4\sin 4z\bigg)+O\left(a^3\right)\\
&=\frac{2\lambda}{\sqrt{3}k}\cos 2z+4\sin 2z-a k^2\left(3\sin z+5\sin 3z\right) +a^2\bigg(\frac{k^3\lambda}{2\sqrt{3}}\cos 2z\\&+6k^4\sin 2z-3k^4\sin 4z\bigg)+O\left(a^3\right),
\end{aligned}
\end{equation*}
\begin{equation*}
\begin{aligned}
T_{2,a} \cos 2z
&=-2\cos 2z+4a k^2\cos z\cos 2z+a^2k^4\left(-3\cos 2z+2\cos^2 2z\right)+O\left(a^3\right)\\&
=-2\cos 2z+4ak^2\frac{\cos 3z+\cos z}{2} +a^2k^4\left(-3\cos 2z+2\frac{1+\cos 4z}{2}\right)+O\left(a^3\right)\\&
=-2\cos 2z+2ak^2\left(\cos z+\cos 3z\right) +a^2k^4\left(1-3\cos 2z+\cos 4z\right)+O\left(a^3\right),
\end{aligned}
\end{equation*}
we have
\begin{equation}\label{C.4}
\begin{aligned}
\mathcal{T}_{a, \mu}^{\lambda} \cos 2z&=T_{0, a}\cos 2z+i \mu T_{1, a}\cos 2z-\frac{\mu^2}{2} T_{2, a}\cos 2z+O\left(\mu^3\right)\\&
=-\frac{4\lambda}{\sqrt{3}k}\sin 2z+3\cos2z-a k^2\left(3\cos z+7\cos 3z\right)+a^2\bigg( -\frac{k^3\lambda}{\sqrt{3}}\sin 2z+\\&6k^4\cos 2z-2k^4- 6k^4\cos 4z\bigg) +i\mu\left[\frac{2\lambda}{\sqrt{3}k}\cos 2z+4\sin 2z-a k^2\left(3\sin z\right.\right.\\&\left.\left.+5\sin 3z\right)+a^2\left(\frac{k^3\lambda}{2\sqrt{3}}\cos 2z+6k^4\sin 2z-3k^4\sin 4z \right)\right]-\frac{\mu^2}{2}\left[-2\cos 2z\right.\\&\left.+2ak^2\left(\cos z+\cos 3z\right)+a^2k^4\left(1-3\cos 2z+\cos 4z\right)\right]+O\left(\mu^3+a^3\right).
\end{aligned}
\end{equation}
and from
\begin{equation*}
\begin{aligned}
T_{0,a} \sin 2z
&= \frac{2\lambda}{\sqrt{3}k} \partial_z \sin 2z- \partial_{zz} \sin 2z - \sin 2z + 2ak^2\left(\sin z \partial_z \sin 2z+ \partial_{zz}\left(\cos z\sin 2z\right)\right)\\
&+a^2\left(\frac{k^3\lambda}{2\sqrt{3}} \partial_z \sin 2z+ 2k^4 \sin 2z \partial_z \sin 2z-\frac{3k^4}{2}\partial_{zz} \sin 2z + k^4 \partial_{zz}\left(\cos 2z \sin 2z\right)\right)\\&+O\left(a^3\right) \\&
=\frac{4\lambda}{\sqrt{3}k} \cos 2z+4\sin 2z - \sin 2z + 2ak^2\left(2\sin z \cos 2z+\left(-\cos z\sin 2z-4\sin z \right.\right.\\&\left.\left.\cos 2z-4\cos z \sin 2z\right)\right)+ a^2\bigg(\frac{k^3\lambda}{\sqrt{3}} \cos 2z+ 4k^4 \sin 2z \cos 2z+ 6k^4\sin 2z+ k^4\\&\left(-4\cos 2z \sin 2z-8\sin 2z\cos 2z-4\cos 2z\sin 2z\right) \bigg)+O\left(a^3\right)\\&
=\frac{4\lambda}{\sqrt{3}k} \cos 2z+3\sin 2z + 2ak^2\bigg(-2\sin z \cos 2z-5\cos z\sin 2z\bigg)\\
&+a^2\left(\frac{k^3\lambda}{\sqrt{3}}\cos 2z+ 6k^4\sin 2z-12k^4 \sin 2z \cos 2z \right)+O\left(a^3\right) \\&
=\frac{4\lambda}{\sqrt{3}k} \cos 2z+3\sin 2z - ak^2\left(3\sin z +7\sin 3z\right)+ a^2\bigg(\frac{k^3\lambda}{\sqrt{3}}\cos 2z\\&+ 6k^4\sin 2z-6k^4\sin 4z\bigg)+O\left(a^3\right),
\end{aligned}
\end{equation*}
\begin{equation*}
\begin{aligned}
T_{1,a} \sin 2z
&=\frac{2\lambda}{\sqrt3 k}\sin 2z-2\p_z\sin 2z+2a k^2\left(\sin z\sin 2z+2\partial_z\left(\cos z\sin 2z\right)\right)+a^2\\&\left(\frac{k^3\lambda}{2\sqrt3}\sin 2z+2k^4\sin^2 2z-3k^4\partial_z \sin 2z+2k^4\partial_z\left(\cos 2z\sin 2z\right)\right)+O\left(a^3\right)\\&
=\frac{2\lambda}{\sqrt{3}k}\sin 2z-4\cos 2z+2a k^2\left(\sin z\sin 2z+2\left(-\sin z\sin 2z+2\cos z\cos 2z\right)\right)\\&+a^2\left(\frac{k^3\lambda}{2\sqrt{3}}\sin 2z+2k^4\sin^2 2z-6k^4\cos 2z+2k^4\left(-2\sin^2 2z+2\cos^2 2z\right)\right)\\&+O\left(a^3\right)\\
&=\frac{2\lambda}{\sqrt{3}k}\sin 2z-4\cos 2z+2a k^2\left(-\sin z\sin 2z+4\cos z\cos 2z\right)+a^2\left(\frac{k^3\lambda}{2\sqrt{3}}\sin 2z\right.\\&\left.+2k^4\frac{1-\cos 4z}{2}-6k^4\cos 2z+2k^4\left(-2\frac{1-\cos 4z}{2}+2\frac{1+\cos 4z}{2}\right)\right)+O\left(a^3\right)\\&
=\frac{2\lambda}{\sqrt{3}k}\sin 2z-4\cos 2z+2a k^2\left(-\frac{\cos z-\cos 3z}{2}+4\frac{\cos z+\cos 3z}{2}\right)+a^2\\&\left(\frac{k^3\lambda}{2\sqrt{3}}\sin 2z+k^4+3k^4\cos 4z-6k^4\cos 2z\right)+O\left(a^3\right)\\
&=\frac{2\lambda}{\sqrt{3}k}\sin 2z-4\cos 2z+a k^2\left(3\cos z+5\cos 3z\right)+a^2\bigg(\frac{k^3\lambda}{2\sqrt{3}}\sin 2z-6k^4\cos 2z\\&+3k^4\cos 4z+k^4\bigg)+O\left(a^3\right),
\end{aligned}
\end{equation*}
\begin{equation*}
\begin{aligned}
T_{2,a} \sin 2z
&=-2\sin 2z+4a k^2\cos z\sin 2z+a^2k^4\left(-3\sin 2z+2\cos 2z\sin 2z\right)+O\left(a^3\right)\quad\quad\quad\\&
=-2\sin 2z+4ak^2\frac{\sin 3z+\sin z}{2} +a^2k^4\left(-3\sin 2z+\sin 4z\right)+O\left(a^3\right)\\&
=-2\sin 2z+2ak^2\left(\sin z+\sin 3z\right) +a^2k^4\left(-3\sin 2z+\sin 4z\right)+O\left(a^3\right),\quad\quad\quad\quad\quad\quad
\end{aligned}
\end{equation*}
we obtain
\begin{equation}\label{C.5}
\begin{aligned}
\mathcal{T}_{a, \mu}^{\lambda} \sin 2z&=T_{0, a}\sin 2z+i \mu T_{1, a}\sin 2z-\frac{\mu^2 }{2} T_{2, a}\sin 2z+O\left(\mu^3\right)\\
&=\frac{4\lambda}{\sqrt{3}k}\cos 2z + 3\sin 2z - a k^2\left(3\sin z + 7\sin 3z\right) + a^2\bigg(\frac{k^3\lambda}{\sqrt{3}}\cos 2z+ 6k^4\\&\sin 2z - 6k^4\sin 4z\bigg)+ i\mu\left[\frac{2\lambda}{\sqrt{3}k}\sin 2z - 4\cos 2z + a k^2(3\cos z+5\cos 3z)\right.\\&\left.+ a^2\left(\frac{k^3\lambda}{2\sqrt{3}}\sin 2z - 6k^4\cos 2z + 3k^4\cos 4z + k^4 \right)\right]-\frac{\mu^2}{2}\left[-2\sin 2z \right.\\&\left.+ 2ak^2\left(\sin z +\sin 3z\right) + a^2k^4\left(-3\sin 2z + \sin 4z\right)\right]+O\left(\mu^3+a^3\right).
\end{aligned}
\end{equation}

\vspace{0.5cm}
\noindent {\bf Acknowledgements}
LLF was partially supported by NSFC Grant No. 12426607 and the NSF of Henan Province of China (Grant No. 252300421218). HJG was partially supported by the Jiangsu Provincial Scientific Research Center of Applied Mathematics under Grant No. BK20233002.

\vspace{0.5cm}
\noindent {\bf Conflict of interest}
The authors declare that there is no conflict of interest.

\vspace{0.5cm}
\noindent {\bf Data Availability}
There is no data associated to this work.

\bibliographystyle{siam}
\bibliography{bKP}


\end{document}